\newcommand{\es}{\varnothing}
\title{{\sc Independent Sets in Edge-Clique Graphs}}
\author{
 Ching-Hao~Liu\inst{}
\and
 Ton~Kloks\inst{}
\and
 Sheung-Hung~Poon\inst{} 
} 
\institute{
 Department of Computer Science\\
 National Tsing Hua University, Taiwan\\
 {\tt chinghao.liu@gmail.com \hspace{.2cm} spoon@cs.nthu.edu.tw}
}
\begin{document}

\maketitle

\begin{abstract}
We first show that the independent set
problem on edge-clique graphs of cographs can be solved in $O(n^2)$ time. 
We then show that the independent 
set problem on edge-clique graphs of graphs without odd wheels 
is NP-complete. 
We present a PTAS for planar graphs and show 
that the problem is polynomial 
for planar graphs without triangle separators.
Lastly, we show that edge-clique graphs of cocktail party graphs 
have unbounded rankwidth. 
\end{abstract}

\section{Introduction}
%%%%%%%%%%%%%%%%%%%%%%

Let $G=(V,E)$ be an undirected graph with vertex set $V$ and 
edge set $E$. A clique is a complete subgraph of $G$. 

\begin{definition}
The edge-clique graph $K_e(G)$ of a graph $G$ has
the edges of $G$ as its vertices and two vertices of 
$K_e(G)$ are adjacent when the corresponding edges in $G$ are 
contained in a clique. 
\end{definition}

\smallskip

Edge-clique graphs were introduced and studied by 
Albertson and Collins% 
~\cite{kn:albertson}.
Two characterizations of edge-clique graphs 
have been presented by~\cite{kn:cerioli4,kn:chartrand},
but the problem whether there is any 
polynomial-time recognition algorithm for edge-clique graphs remains open.
Some results of edge-clique graphs can be 
found 
in~\cite{kn:albertson,kn:calamoneri,kn:cerioli2,kn:cerioli4,%
kn:cerioli3,kn:chartrand,kn:kong1,kn:mckee,kn:prisner,%
kn:raychaudhuri1,kn:raychaudhuri2}.
Notice that edge-clique graphs were
implicitly first used by Kou et al~\cite{kn:kou}.
In the following we introduce the independent set problem.

\smallskip

An independent set in a graph $G$ is a nonempty set of vertices $A$
such that there is no edge between any pair of vertices in $A$.
The independent set problem asks the maximal cardinality of independent sets, 
which is called independent set number and denoted by $\alpha(G)$.
In general graphs, this problem is known to be NP-hard
and it remains NP-hard even for triangle-free graphs~\cite{kn:poljak}
and planar graphs of degree at most three~\cite{kn:garey}.
In some classes of graphs, for example claw-free graphs~\cite{kn:sbihi} 
and perfect graphs~\cite{kn:grotschel}, 
this problem can be solved in polynomial time.

\smallskip

A subset $A$ of edges in a graph $G$ is independent 
if it induces an independent set in $K_e(G)$,
that is, no two edges of $A$ are contained in a clique of $G$.
We denote the maximal cardinality of independent sets of edges of $G$ by
$\alpha^{\prime}(G)=\alpha(K_e(G))$.
Notice that, for any graph $G$, the clique number of 
its edge-clique graph satisfies 
\[\omega(K_e(G))=\binom{\omega(G)}{2}.\] 
For the independent set number $\alpha(K_e(G))$ there is 
no such relation. 
For example, when $G$ has no triangles then $K_e(G)$ is 
an independent set and the independent 
set problem in triangle-free graphs is NP-complete.

\smallskip

There is another problem relating to independent sets in edge-clique graphs, 
which is called the edge-clique cover problem.
An edge-clique covering of $G$ is a family of 
complete subgraphs such that 
each edge of $G$ is in at least one member of the family. 
The edge-clique cover problem asks the minimal cardinality of such a family,
which is called the edge-clique covering number and denoted by $\theta_e(G)$.

\smallskip 

The problem of deciding if $\theta_e(G) \leq k$, 
for a given natural number $k$, is NP-complete~\cite{kn:holyer,kn:kou,kn:orlin}. 
The problem remains NP-complete when restricted to graphs 
with maximum degree at most six~\cite{kn:hoover}.  
Hoover~\cite{kn:hoover} gave a polynomial time algorithm for 
graphs with maximum degree at most five. 
For graphs with 
maximum degree less than five, this was already done 
by Pullman~\cite{kn:pullman}. 
Also for linegraphs the problem can be solved in 
polynomial time~\cite{kn:orlin,kn:pullman}. 
In~\cite{kn:kou} it was shown that approximating the 
clique covering number within a constant factor 
smaller than two remains NP-complete. 
The independence number of $K_e(G)$ equals $\theta_e(G)$ for 
graphs $G$ that are chordal. 
For interval graphs 
the edge-clique covering number $\theta_e(G)$ 
equals the number of maximal cliques~\cite{kn:scheinerman}.

\smallskip

We organized this paper as follows.
In Section~2 and Section~3 we show that 
the independent set problem on edge-clique graphs  
is NP-hard in general and remains so for edge-clique graphs of graphs without 
odd wheels.
We show in Sections~2.1 and~2.2 
that the problem can be solved in $O(n^2)$ time for cographs  
and in polynomial time for distance-hereditary graphs. 
In Section~4, we present a PTAS for planar graphs
and show that the problem is polynomial for planar graphs without triangle separator.

\smallskip 

Notice that cographs have rankwidth one. 
If edge-clique graphs of cographs $G$ would have
bounded rankwidth, then computing $\alpha^{\prime}(G)$ would be 
easy,  because computing the independence number is polynomial 
for graphs of bounded rankwidth.
However, we show in Section~5 that this is not the case, not even for 
edge-clique graphs of cocktail parties. 
Finally, we conclude in Section~6.

\section{Algorithms for cographs}
%%%%%%%%%%%%%%%%%%%%%%%%%%%%%%%%%

In this section we begin with a NP-hardness proof
for the computation of $\alpha^{\prime}(G)$ for arbitrary graphs $G$.
Then we provide algorithms for 
the computation of independence number
of edge-clique graphs of cographs and distance-hereditary graphs.

\smallskip

The following lemma shows that the independent set problem 
in $K_e(G)$ can be reduced to the independent set problem in $G$. 

\begin{lemma}
\label{alpha' NP-hard}
The computation of $\alpha^{\prime}(G)$ 
for arbitrary graphs $G$ is NP-hard.
\end{lemma}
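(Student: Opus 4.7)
The plan is to reduce from the independent set problem on triangle-free graphs, which is NP-hard by Poljak~\cite{kn:poljak}. Given a triangle-free graph $G=(V,E)$ with $m=|E|$, set $k:=m+1$ and construct $H$ by adjoining $k$ new vertices $u_1,\ldots,u_k$, each made adjacent to every vertex of $V$ and pairwise non-adjacent. Both $H$ and $k$ are computable in polynomial time.

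The first step is to pin down the clique structure of $H$. Since $G$ is triangle-free and $\{u_1,\ldots,u_k\}$ is independent, $H$ has no $K_4$: four mutually adjacent vertices would force either a triangle of $G$ (when at most one vertex is some $u_i$) or an edge between two $u_i$'s. Consequently every triangle of $H$ has the form $\{u_i,v,w\}$ with $vw\in E$, so two edges of $H$ lie in a common clique iff they lie in such a triangle. This determines $K_e(H)$: for each fixed $i$ the star-edges $\{u_iv:v\in V\}$ induce a copy of $G$; star-edges from different layers $u_i$ and $u_j$ are pairwise non-adjacent; a star-edge $u_iv$ is adjacent to an original edge $e\in E$ exactly when $v\in e$; and distinct original edges are non-adjacent.

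The second step is to bound $\alpha'(H)$. Any independent set of $K_e(H)$ decomposes as $\bigcup_i\{u_iv:v\in S_i\}\cup T$, where each $S_i$ is independent in $G$ and $T\subseteq E$ contains no edge incident to $\bigcup_i S_i$. Using $|S_i|\le\alpha(G)$ and $|T|\le m$ yields $\alpha'(H)\le k\alpha(G)+m$. Choosing every $S_i$ equal to a maximum independent set of $G$ and $T=\es$ realizes $k\alpha(G)$, hence
\[
k\alpha(G)\;\le\;\alpha'(H)\;\le\;k\alpha(G)+m\;<\;k\bigl(\alpha(G)+1\bigr),
\]
so $\alpha(G)=\lfloor\alpha'(H)/k\rfloor$ and the reduction is complete.

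The only subtle step is the clique analysis; once triangle-freeness of $G$ and independence of the $u_i$'s are combined to kill all $K_4$'s and all triangles other than those of the form $\{u_i,v,w\}$, the bounds on $\alpha'(H)$ become a direct counting exercise and the NP-hardness of $\alpha'$ follows.
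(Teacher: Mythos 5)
Your proof is correct, but it takes a genuinely different route from the paper. The paper starts from an \emph{arbitrary} graph $G$, attaches two simplicial vertices to every edge and one universal vertex $x$, and then proves the exact identity $\alpha'(H)=2m+\alpha(G)$ via an exchange argument (any optimal solution can be assumed to avoid the original edges of $G$, and the edges at $x$ encode an independent set of $G$). You instead reduce from independent set on \emph{triangle-free} graphs (Poljak), which pays off twice: since $G$ is triangle-free and your added vertices $u_1,\dots,u_k$ are pairwise non-adjacent, $H$ has no $K_4$ and every triangle is of the form $\{u_i,v,w\}$ with $vw\in E$, so the structure of $K_e(H)$ is completely explicit ($k$ disjoint copies of $G$ on the star edges, the original edges forming an independent set attached to the stars at their endpoints) and no exchange argument is needed; the price is that you only get the sandwich $k\alpha(G)\le\alpha'(H)\le k\alpha(G)+m$, which you repair by amplification, taking $k=m+1$ copies of the universal vertex so that $\alpha(G)=\lfloor \alpha'(H)/k\rfloor$. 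I checked the adjacency analysis and the counting, including the decomposition of an arbitrary independent set of $K_e(H)$ into the sets $S_i$ and $T$, and both bounds hold, so the reduction is sound and polynomial. In short: the paper's gadget yields an exact formula for arbitrary inputs with a small additive blow-up, while your construction trades exactness for a structurally transparent $K_e(H)$ at the cost of restricting the source problem and multiplying the universal vertex.
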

\begin{proof}
Let $G$ be an arbitrary graph.
Construct a graph $H$ as follows. At every edge in $G$ add two
simplicial vertices\footnote{An additional argument would show that 
adding one simplicial per edge is sufficient to prove NP-hardness.},
both adjacent to the two endvertices of the edge.
Add one extra vertex $x$ adjacent to all vertices of $G$.
Let $H$ be the graph constructed in this way. 

\smallskip 

\noindent
Notice that a maximum set of independent edges does not
contain any edge of $G$ since it would be better to replace
such an edge by two edges incident with the two
simplicial vertices at this edge.
Also notice that a set of independent edges incident with $x$
corresponds with an independent set of vertices in $G$.
Hence 
\[\alpha^{\prime}(H) = 2m + \alpha(G),\] 
where
$m$ is the number of edges of $G$.
\qed\end{proof}

\subsection{Cographs}
%%%%%%%%%%%%%%%%%%%%%

A cograph is a graph without induced $P_4$. It is well-known that 
a graph is a cograph if and only if every induced subgraph with 
at least two vertices is either a join or a union of two 
smaller cographs. It follows that a cograph $G$ has a decomposition 
tree $(T,f)$ where $T$ is a rooted binary tree and $f$ is a bijection 
from the vertices of $G$ to the leaves of $T$. Each internal node 
of $T$, including the root, is labeled as $\otimes$ or $\oplus$. 
The $\otimes$-node joins the two subgraphs mapped to the left 
and right subtree. The $\oplus$ unions the two subgraphs. 
When $G$ is a cograph then a decomposition tree as described 
above can be obtained in linear time~\cite{kn:corneil}. 
  
\begin{lemma}
\label{adjacent edges in join}
Let $G$ be a cograph. Assume that $G$ is the join of two smaller 
cographs $G_1$ and $G_2$. Then any edge in $G_1$ is adjacent in 
$K_e(G)$ to any edge in $G_2$. 
\end{lemma}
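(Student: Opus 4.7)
The plan is to unfold the definitions and exhibit, for any two such edges, an explicit clique of $G$ containing both. Concretely, pick an arbitrary edge $e_1 = uv$ with $u,v \in V(G_1)$ and an arbitrary edge $e_2 = xy$ with $x,y \in V(G_2)$. The goal is to show that $\{u,v,x,y\}$ induces a clique in $G$, which by the definition of $K_e(G)$ immediately gives that $e_1$ and $e_2$ are adjacent in $K_e(G)$.

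To verify the clique, I would enumerate the six pairs. The edge $uv$ is present because $e_1 \in E(G_1) \subseteq E(G)$; the edge $xy$ is present because $e_2 \in E(G_2) \subseteq E(G)$; and the four cross pairs $ux$, $uy$, $vx$, $vy$ are all edges of $G$ because $G$ is the join $G_1 \otimes G_2$, which by definition inserts every possible edge between $V(G_1)$ and $V(G_2)$.

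Since $e_1$ and $e_2$ were arbitrary edges in $G_1$ and $G_2$ respectively, this establishes the lemma. There is no real obstacle here: the argument is a one-step verification from the definitions of join, of edge, and of $K_e(G)$. The only thing to be careful about is to state explicitly that the four cross edges come from the join operation, so that the reader sees the induced $K_4$ on $\{u,v,x,y\}$ serving as the common clique.
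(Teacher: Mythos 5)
Your argument is correct: the four cross edges supplied by the join make $\{u,v,x,y\}$ induce a $K_4$, so the two edges lie in a common clique and are adjacent in $K_e(G)$. The paper states this lemma without proof, treating it as immediate from the definition of join, and your verification is exactly that intended one-step argument.
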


\smallskip 

For a vertex $x$, let $d^{\prime}(x)$ be the independence 
number of the subgraph of $G$ induced by $N(x)$, that is,  
\begin{equation}
d^{\prime}(x)=\alpha(G[N(x)]).
\end{equation}  

\begin{lemma}
\label{lem cograph}
Let $G$ be a cograph. Then 
\begin{equation}
\label{eqn0}
\alpha^{\prime}(G)=\max \;\{\;\sum_{x \in W} d^{\prime}(x)\;|\; 
\text{$W$ is an independent set in $G$}\;\}. 
\end{equation}
\end{lemma}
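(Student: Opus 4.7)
I would prove the two inequalities separately.

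For $\alpha^{\prime}(G) \geq \max_W \sum_{x \in W} d^{\prime}(x)$ (which actually holds for arbitrary graphs), I would use a direct construction. Given an independent set $W$ in $G$ and, for each $x \in W$, a maximum independent set $I_x$ in $G[N(x)]$, consider the edge set $F=\{\{x,y\}\,:\,x\in W,\;y\in I_x\}$. Two edges of $F$ sharing a center $x$ are non-adjacent in $K_e(G)$ because $I_x$ is independent, so they share no triangle; two edges with distinct centers $x_1,x_2\in W$ cannot lie in a common clique since $x_1x_2\notin E(G)$. The edges are distinct, so $|F|=\sum_{x\in W}d^{\prime}(x)$.

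For the reverse inequality, I would induct on $|V(G)|$ via the cograph decomposition. When $G=G_1\oplus G_2$ the claim is straightforward: $K_e(G)$ is the disjoint union of $K_e(G_1)$ and $K_e(G_2)$, so $\alpha^{\prime}(G)=\alpha^{\prime}(G_1)+\alpha^{\prime}(G_2)$, independent sets split as $W=W_1\cup W_2$ with $W_i$ independent in $G_i$, and $d^{\prime}(x)=d^{\prime}_{G_i}(x)$ for $x\in V(G_i)$, so the formula follows inductively and additively.

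The substantive case is the join $G=G_1\otimes G_2$. Let $M$ be a maximum independent edge set in $K_e(G)$. By Lemma~\ref{adjacent edges in join}, edges in $E(G_1)$ and $E(G_2)$ are mutually adjacent in $K_e(G)$, so WLOG $M\cap E(G_2)=\varnothing$; write $M=M_1\cup M_c$ with $M_1\subseteq E(G_1)$ and $M_c$ the cross edges of $M$. I would use three structural observations: (i) a vertex $u\in V_1$ incident to a cross edge $\{u,v\}\in M_c$ cannot be incident to any $M_1$-edge $\{u,a\}$, because $\{u,a,v\}$ would be a triangle; hence the $V_1$-endpoint set $V_1^c$ of $M_c$ is disjoint from the endpoints of $M_1$, and $M_1\subseteq E(G_1[V_1\setminus V_1^c])$. (ii) For each $u\in V_1^c$, the partner set $B_u=\{v\,:\,\{u,v\}\in M_c\}$ is independent in $G_2$, and more generally, for any clique $K\subseteq V_1^c$ of $G_1$ the union $\bigcup_{u\in K} B_u$ is independent in $G_2$, so its size is at most $\alpha(G_2)$. (iii) By perfection of cographs, the clique-cover number of $G_1[V_1^c]$ equals $\alpha(G_1[V_1^c])$, yielding $|M_c|\le\alpha(G_1[V_1^c])\cdot\alpha(G_2)$. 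The inductive hypothesis applied to the proper subgraph $G_1[V_1\setminus V_1^c]$ gives $|M_1|\le\max_{W_1\text{ indep}}\sum_{x\in W_1}d^{\prime}_1(x)$ with $W_1\subseteq V_1\setminus V_1^c$. Combining a maximum independent $W^\prime\subseteq V_1^c$ in $G_1[V_1^c]$ with $W_1$ gives $W=W^\prime\cup W_1\subseteq V_1$, and using $d^{\prime}(x)=\max(d^{\prime}_1(x),\alpha(G_2))$ for $x\in V_1$, one obtains $\sum_{x\in W}d^{\prime}(x)\ge |W^\prime|\,\alpha(G_2)+\sum_{x\in W_1}d^{\prime}_1(x)\ge |M_c|+|M_1|=|M|$.

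The main obstacle is verifying that $W^\prime\cup W_1$ is independent in $G_1$, since $G_1$-edges could a priori run between $V_1^c$ and $V_1\setminus V_1^c$. I would resolve this through an exchange argument on $M$: if $M$ is maximum with $M_1\neq\varnothing$ and $V_1^c$ contains a $G_1$-adjacent pair $u,u^\prime$ incident to cross edges $\{u,v\},\{u^\prime,v^\prime\}\in M_c$, then the non-adjacency conditions in $K_e(G)$ force enough non-edges in $G_1$ between the $M_1$-edge $\{a,b\}$ and $\{u,u^\prime\}$ to allow replacing $\{a,b\}$ by the two cross edges $\{a,v\},\{a,v^\prime\}$ (or the symmetric choice at $b$), strictly increasing $|M|$---contradicting maximality. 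Hence at the maximum either $M_1=\varnothing$ (and then the symmetric choice $W\subseteq V_2$ covering $M_c$ works) or $V_1^c$ is independent in $G_1$, in which case $W^\prime$ and $W_1$ can be chosen compatibly without loss, completing the bound $|M|\le\max_W\sum_{x\in W}d^{\prime}(x)$.
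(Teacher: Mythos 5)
Your lower-bound construction and the union case are fine, and your overall route (cotree induction plus a structural analysis of the join) is genuinely different from the paper, which instead uses the twin characterization of cographs and an exchange argument on a single twin pair. However, your join case has a real gap, concentrated exactly where you locate "the main obstacle." The exchange you propose -- replacing an $M_1$-edge $\{a,b\}$ by the two cross edges $\{a,v\},\{a,v'\}$ (or the choice at $b$) -- is only checked against the two cross edges $\{u,v\},\{u',v'\}$ themselves. Independence of the new set must also be checked against \emph{all other} edges of $M$: the edge $\{a,v\}$ conflicts with any cross edge $\{w,z\}\in M_c$ with $aw\in E(G_1)$ and $z\in N_{G_2}[v]$ (the clique $\{a,w,v,z\}$), and with any other $M_1$-edge $\{c,d\}$ whose both endpoints are adjacent to $a$ (the clique $\{a,c,d,v\}$); symmetrically at $b$. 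Nothing in the independence of $M$ rules these blockers out, so the claimed dichotomy "at a maximum, $M_1=\varnothing$ or $V_1^c$ is independent" is not established; if it can be saved at all, it needs $P_4$-freeness in an essential way (e.g., the simplest cross-edge blocking pattern at $a$ and $b$ forces an induced $P_4$), and your sketch never invokes that.

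Second, even granting the dichotomy, the favorable case does not close: when $V_1^c$ is independent you take $W'=V_1^c$ (or a maximum independent set of $G_1[V_1^c]$) and $W_1$ from the inductive hypothesis on $G_1[V_1\setminus V_1^c]$, but $G_1$ may have edges between $V_1^c$ and $V_1\setminus V_1^c$, so $W'\cup W_1$ need not be independent in $G_1$; "can be chosen compatibly without loss" is precisely the statement that needs proof, since $M_1$ may be concentrated on neighbors of $V_1^c$. (By contrast, your $M_1=\varnothing$ subcase is sound: there $W$ can be taken inside $V_2^c$ and no compatibility issue arises.) The paper avoids this interaction entirely by inducting on a twin pair $x,y$: it normalizes a maximum independent edge set so that the edges at $y$ mirror those at $x$ (false twins) or are merged onto $x$ (true twins), and then applies induction to the smaller cograph, so no join-wide bookkeeping of cross edges versus $G_1$-edges is ever needed. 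To repair your proof you would need either a correct global exchange argument producing a maximum $M$ that is a union of stars centered on an independent set, or a replacement for the $W'\cup W_1$ step; as written, both steps are asserted rather than proved.
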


\begin{proof}
Cographs are characterized by the fact that every induced subgraph has a twin.
Let $x$ be a false twin of a vertex $y$ in $G$. 
Let $A$ be a maximum independent set of edges in $G$. 
Let $A(x)$ and $A(y)$ be the  
sets of edges in $A$ that are incident with $x$ and $y$. 
Assume that $|A(x)| \geq |A(y)|$. Let $\Omega(x)$ 
be the set of endvertices in $N(x)$ of edges in $A(x)$.  
then we may replace the set $A(y)$ 
with the set 
\[\{\;\{y,z\}\;|\; z \in \Omega(x)\;\}.\] 
The cardinality of the new set is at least as large as $|A|$. 
Notice that, for any maximal independent set $Q$ in $G$, either 
$\{x,y\} \subseteq Q$ or $\{x,y\} \cap Q = \es$. 
By induction on the number of 
vertices in $G$, Equation~(\ref{eqn0}) is valid. 

\smallskip 

\noindent
Let $x$ be a true twin of a vertex $y$ in $G$. 
Let $A$ be a maximum independent set of edges in $G$ and 
let $A(x)$ and $A(y)$ be the sets of  
edges in $A$ that 
are incident with $x$ and $y$. 
If $\{x,y\} \in A$ then $A(x)=A(y)=\{\{x,y\}\}$. 

\smallskip 

\noindent
Now assume that $\{x,y\} \notin A$. 
Endvertices in $N(x)$ of edges in $A(x)$ and $A(y)$ 
are not adjacent nor do they coincide. Replace $A(x)$ with 
\[\{\;\{x,z\}\;|\; \{x,z\} \in A(x) \quad\text{or}\quad 
\{y,z\} \in A(y) \;\}\] 
and set $A(y)=\es$. Then the new set of edges is independent 
and has the same cardinality as $A$.   

\smallskip 

\noindent
Let $Q$ be an independent set in $G$. At most one of 
$x$ and $y$ is in $Q$. 
By induction on the number of vertices in $G$,
the validity of Equation~(\ref{eqn0}) 
is easily checked.  
\qed\end{proof}

\begin{remark}
Notice that the righthand side of Equation~(\ref{eqn0})
is a lowerbound for $\alpha^{\prime}(G)$ for any graph $G$.
In this paper we find that there is an equality for some 
classes of graphs. Notice that, for example, equality does 
not hold for $C_5$. 
\end{remark}

\smallskip

\begin{theorem}
\label{thm cograph}
When $G$ is a cograph then 
$\alpha^{\prime}(G)$ satisfies Equation~{\rm (\ref{eqn0})}.
This value can be computed in $O(n^2)$ time.
\end{theorem}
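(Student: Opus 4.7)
The validity of Equation~(\ref{eqn0}) is already established by Lemma~\ref{lem cograph}, so the task reduces to evaluating its right-hand side in $O(n^2)$ time. The plan is to build the decomposition tree $(T,f)$ in linear time via~\cite{kn:corneil}, and then carry out three passes on it: one bottom-up pass that computes $\alpha(G[T_v])$ for every subtree, one top-down pass that extracts $d^{\prime}(x)$ for every vertex, and one final bottom-up pass that performs the weighted maximization over independent sets in~(\ref{eqn0}).

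For every node $v$ of $T$, let $T_v$ denote the vertex set of $G$ corresponding to the leaves in the subtree rooted at $v$, and write $a(v):=\alpha(G[T_v])$. The standard recursion $a(\text{leaf})=1$, $a(\oplus\text{-node})=a(\ell)+a(r)$, and $a(\otimes\text{-node})=\max(a(\ell),a(r))$, where $\ell,r$ are the two children, computes every $a(v)$ in $O(n)$ time in total.

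The key structural observation is the following. Walk from the root of $T$ down to the leaf $f(x)$: at each $\otimes$-ancestor let $U$ be the vertex set of its child not on this path, and let $U_1,\dots,U_k$ denote the resulting sibling vertex sets; $\oplus$-ancestors contribute nothing to $N(x)$. Then $N(x)=U_1\cup\cdots\cup U_k$, and since any two leaves in distinct $U_i$ and $U_j$ have a $\otimes$-node as their least common ancestor in $T$, $G[N(x)]$ is the iterated join $G[U_1]\otimes\cdots\otimes G[U_k]$. Consequently $d^{\prime}(x)=\max_{i}a(U_i)$. I would collect all these values in a single top-down traversal of $T$ that maintains the running maximum of the $a(U_i)$ seen so far on the path, for a total of $O(n)$ time.

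Given the weights $d^{\prime}(x)$, the right-hand side of~(\ref{eqn0}) is a maximum-weight independent set in $G$, which on a cograph is computed by the same style of cotree recursion (sum at $\oplus$-nodes, maximum at $\otimes$-nodes) in $O(n)$ time. Adding the $O(n+m)=O(n^2)$ needed to read $G$ and to build its cotree yields the claimed $O(n^2)$ bound. The only step that is not textbook cotree dynamic programming is the join decomposition of $G[N(x)]$ above; once that is verified, everything else falls out routinely.
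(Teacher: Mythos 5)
Your proposal is correct and follows essentially the same route as the paper: validity of Equation~(\ref{eqn0}) via Lemma~\ref{lem cograph}, then dynamic programming on the cotree that computes the subtree independence numbers, the weights $d^{\prime}(x)$, and finally a maximum-weight independent set (sum at $\oplus$-nodes, maximum at $\otimes$-nodes). Your top-down running-maximum computation of $d^{\prime}(x)$, justified by the join decomposition of $G[N(x)]$, is just the unrolled form of the paper's bottom-up recursion $d^{\prime}(x)=\max\{d^{\prime}_1(x),\alpha(G_2)\}$, so both arguments coincide in substance and stay within the claimed $O(n^2)$ bound.
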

\begin{proof}
We proved that $\alpha^{\prime}(G)$ 
satisfies Equation~{\rm (\ref{eqn0})}
in Lemma~\ref{lem cograph}.
Here we show that $\alpha^\prime(G)$ 
can be computed in $O(n^2)$ time.

\smallskip 

\noindent 
The algorithm first computes a decomposition tree $(T,f)$ 
for $G$ in linear time. For each node $p$ of $T$ 
let $G_p$ be the subgraph induced by the set of 
vertices that are mapped to leaves in the subtree rooted at $p$.   
Notice that the independence 
number of each $G_p$ can be computed in linear time. 

\smallskip 

\noindent
Compute $d^{\prime}(x)$ for $x \in V$ as follows. 
Assume $G$ is the union of $G_1=(V_1,E_1)$ and $G_2=(V_2,E_2)$. 
For $i \in \{1,2\}$, 
let $d_i^{\prime}(x)=\alpha(G_i[N(x)])$ for vertices $x$ in $G_i$. 
Then 
\begin{equation}
d^{\prime}(x)=
\begin{cases}
d^{\prime}_1(x) \quad\text{if $x \in V_1$} \\
d^{\prime}_2(x) \quad\text{if $x \in V_2$.}
\end{cases}
\end{equation}

\smallskip 

\noindent 
Assume that $G$ is the join of $G_1$ and $G_2$. 
Then 
\begin{equation}
d^{\prime}(x)=
\begin{cases}
\max\;\{\;d^{\prime}_1(x),\;\alpha(G_2)\;\} \quad\text{if $x \in V_1$}\\
\max\;\{\;d^{\prime}_2(x),\;\alpha(G_1)\;\} \quad\text{if $x \in V_2$}. 
\end{cases}
\end{equation}

\smallskip 

\noindent 
Evaluate $\alpha^{\prime}(G)$ as follows. 
Assume that $G$ is the union of $G_1$ and $G_2$. 
Then 
\begin{equation}
\alpha^{\prime}(G)=\alpha^{\prime}(G_1)+\alpha^{\prime}(G_2). 
\end{equation}

\smallskip 

\noindent
Assume that $G$ is the join of $G_1$ and $G_2$. 
For $i \in \{1,2\}$, let 
\begin{equation}
\alpha^{\prime}_i 
=\max \;\{\;\sum_{x \in W}\; d^{\prime}(x)\;|\; 
\text{$W$ is an independent set in $G_i$}\;\}.
\end{equation}
Then 
\begin{equation}
\alpha^{\prime}(G)=\max \; \{\;\alpha^{\prime}_1,\;\alpha^{\prime}_2\;\}.
\end{equation}

\smallskip 

\noindent
This completes the proof.
\qed\end{proof}

\begin{remark}
For any graph $G$ whose edge-clique graph is perfect
the intersection number of $G$ equals the fractional 
intersection number of $G$~\cite[Theorem 4.1]{kn:kong1}. 
It is easy to see that for cographs $G$, $K_e(G)$ is not 
necessarily perfect. 
For example, when $G$ is the join of 
$P_3$ and $C_4$, then $K_e(G)$ 
contain a $C_5$ as an induced subgraph.\footnote{Let $P_3=\{1,2,3\}$ and $C_4=\{4,5,6,7\}$.
There is a $C_5$ with vertices 
\[\{\{1,2\},\{3,4\},\{3,7\},\{4,5\},\{6,7\}\}.\]\vspace{-5mm}}
\end{remark}     

\subsection{Distance-hereditary graphs}
%%%%%%%%%%%%%%%%%%%%%%%%%%%%%%%%%%%%%%%

A graph $G$ is distance-hereditary if the distance 
between any two nonadjacent vertices, 
in any connected induced subgraph of $G$, 
is the same as their distance in the $G$~\cite{kn:howorka}. 
Bandelt and Mulder obtained the following 
characterization of distance-hereditary graphs. 

\begin{lemma}[\cite{kn:bandelt}]
\label{DH characterize}
A graph is distance-hereditary if and only if every induced subgraph 
has an isolated vertex, a pendant vertex or a twin.
\end{lemma}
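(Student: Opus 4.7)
The plan is to prove both directions by induction on $|V(G)|$. For the backward implication ($\Leftarrow$), assume every induced subgraph of $G$ has an isolated vertex, a pendant vertex, or a twin, and let $v$ be such a distinguished vertex of $G$. The hypothesis is inherited by $G-v$, since induced subgraphs of $G-v$ are induced subgraphs of $G$, so by induction $G-v$ is distance-hereditary. It remains to verify that each of the three operations — re-attaching $v$ as isolated, pendant, or true/false twin — preserves distance-hereditariness. This reduces to checking, for any connected induced subgraph $H$ of $G$ and any pair $x,y\in V(H)$, that $d_H(x,y) = d_G(x,y)$. The case split is based on whether $v\in V(H)$ and whether $v\in\{x,y\}$; in each case the attachment rule (isolated, pendant, or copied neighborhood) shows that inserting or removing $v$ neither shortens the $x$--$y$ connection nor affects distances between pairs avoiding $v$.

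For the forward implication ($\Rightarrow$), assume $G$ is distance-hereditary. Since induced subgraphs of distance-hereditary graphs are again distance-hereditary, it suffices to exhibit an isolated vertex, a pendant, or a twin in $G$ itself. If $G$ is disconnected we either see an isolated vertex in some component or may pass to a connected component, so we may assume $G$ is connected with minimum degree at least $2$, and aim to produce a twin. Fix any vertex $v$ and consider the BFS layering $N_0=\{v\},N_1,\ldots,N_k$. The key structural observation is that whenever two vertices $x,y\in N_i$ share a neighbor in $N_{i-1}$, distance-hereditariness forces $N(x)\cap N_{i-1}=N(y)\cap N_{i-1}$: otherwise we can pick $a\in N(x)\cap N_{i-1}\setminus N(y)$ and some $b\in N(y)\cap N_{i-1}$, delete everything outside a carefully chosen connected induced subgraph linking them through $x$ and $y$, and detect a strict increase of $d(a,b)$. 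Applied to the deepest layer $N_k$, where the absence of pendants forces every vertex to have at least two upward neighbors, this observation produces two vertices of $N_k$ with identical upward neighborhoods; a companion argument, comparing their adjacencies inside $N_k$, promotes them to true or false twins.

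The main obstacle is the last step of the forward direction: ensuring that the pair produced agrees not only on its upward neighbors but also on mutual adjacency and on neighbors within $N_k$, so that the pair truly qualifies as a twin rather than merely overlapping in $N_{k-1}$. Handling this cleanly requires a coordinated use of the distance-preservation property on induced paths and short induced cycles, and it is precisely here that the absence of isolated and pendant vertices is invoked to rule out degeneracies; the backward implication, by contrast, is routine once the three cases of the attachment are written out.
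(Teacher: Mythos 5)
The paper itself offers no proof of this lemma---it is quoted from Bandelt and Mulder---so the only question is whether your argument stands on its own, and its forward direction does not. Your ``key structural observation'' is false: it is not true in a distance-hereditary graph that two vertices $x,y\in N_i$ sharing a neighbour in $N_{i-1}$ must satisfy $N(x)\cap N_{i-1}=N(y)\cap N_{i-1}$. Take the $4$-cycle $v\,a\,x\,b$ and attach a triangle $b\,y\,z$ at $b$. This graph is distance-hereditary (build it by pendant and twin additions: path $v\,a\,x$, add $b$ as a false twin of $a$, add $y$ pendant at $b$, add $z$ as a true twin of $y$), it is connected with minimum degree $2$, and in the hanging from $v$ we have $N_1=\{a,b\}$, $N_2=\{x,y,z\}$. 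Here $x$ and $y$ share the upward neighbour $b$, yet $N(x)\cap N_1=\{a,b\}\neq\{b\}=N(y)\cap N_1$; and your suggested certificate cannot exist, since every connected induced subgraph containing $a$ and $b$ keeps $d(a,b)=2$ (any such subgraph contains $v$ or $x$, each adjacent to both). The correct layer property for distance-hereditary graphs requires $x$ and $y$ to be adjacent, or connected within the union of levels $\geq i$, not merely to share an upward neighbour. The same example kills your second claim: $y$ and $z$ lie in the deepest layer, are not pendant (minimum degree is $2$), yet each has only one upward neighbour, so ``no pendants forces two upward neighbours in $N_k$'' is wrong.

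Beyond these two false steps, the part you defer---showing that a pair agreeing on upward neighbours can be ``promoted'' to a genuine (true or false) twin by also controlling adjacency inside $N_k$---is precisely where the substance of the Bandelt--Mulder argument lies; their proof works with the connected components of the last layer, shows all vertices of such a component have the same neighbourhood in $N_{k-1}$, and then extracts twins inside a component (or a pendant vertex when a component is a single vertex with a unique upward neighbour). As written, your forward direction rests on two incorrect lemmas and a hand-waved final step, so it does not establish the characterization. The backward direction (adding an isolated vertex, a pendant, or a twin preserves distance-hereditariness, plus induction) is fine in outline and is indeed routine.
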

The papers~\cite{kn:bandelt} and~\cite{kn:howorka} 
also contain characterizations of 
distance-hereditary graphs in terms of forbidden induced 
subgraphs. 

\bigskip 

Let $G$ be distance-hereditary. 
Then $\alpha^{\prime}(G)$ does not necessarily satisfy  
Equation~{\rm (\ref{eqn0})}. A counterexample is as 
follows. Take two 4-wheels and join the two centers 
by an edge. The maximum edge-independent set 
consists of the edges of the two 4-cycles and the edge 
joining the two centers. However, there is no 
independent set such that~\eqref{eqn0} gives this value. 

\section{NP-completeness for graphs without odd wheels}
%%%%%%%%%%%%%%%%%%%%%%%%%%%%%%%%%%%%%%%%%%%%%%%%%%%%%%%

A wheel $W_n$ is a graph consisting of a cycle $C_n$ and one 
additional vertex adjacent to all vertices in the cycle. 
The universal vertex of $W_n$ is called the hub. It is unique unless 
$W_n=K_4$. The edges incident with the hub are called the 
spokes of the wheel. The cycle is called the rim of the wheel. 
A wheel is odd if the number of vertices in the cycle is odd. 

\smallskip 

Lakshmanan, Bujt\'as and Tuza investigate the class of graphs 
without odd wheels in~\cite{kn:lakshmanan}. 
They prove that Tuza's conjecture holds true for 
this class of graphs (see also~\cite{kn:haxell}). 
Notice that a graph $G$ has no odd wheel if and only if 
every neighborhood in $G$ induces a bipartite graph. It follows 
that $\omega(G) \leq 3$. Obviously, the class of graphs 
without odd wheels is closed under taking subgraphs. 
Notice that, when $G$ has no odd wheel then every neighborhood 
in $K_e(G)$ is either empty or a matching. Furthermore, 
it is easy to see that $K_e(G)$ contains no diamond (every 
edge is in exactly one triangle), no $C_5$ and no  
odd antihole. 

\smallskip 

For graphs $G$ without odd wheels 
$K_e(G)$ coincides 
with the anti-Gallai graphs introduced by Le~\cite{kn:le},
since $\omega(G) \leq 3$.
For general anti-Gallai graphs the computation of the clique  
number and chromatic number 
are NP-complete.

Let us mention that the recognition of anti-Gallai graphs 
is NP-complete. Even when each edge in $G$ is in exactly one triangle, 
the problem to decide if $G$ is an anti-Gallai graph is 
NP-complete~\cite[Corollary 5.2]{kn:anand}. 
The recognition of edge-clique graphs of graphs without odd 
wheels is, as far as we know, open. Let us also mention 
that the edge-clique graphs 
of graphs without odd wheels are clique graphs~\cite{kn:cerioli2}. 
The recognition of 
clique graphs of general graphs is NP-complete~\cite{kn:alcon}.  

\begin{theorem}
\label{NP_c}
The computation of $\alpha^{\prime}(G)$ is NP-complete 
for graphs $G$ without odd wheels. 
\end{theorem}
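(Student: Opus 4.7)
The plan is to verify the two halves of NP-completeness in turn. Containment in NP is immediate: given a candidate set $A\subseteq E(G)$, one checks in polynomial time whether $A$ is independent in $K_e(G)$ by testing, for each pair of edges in $A$, whether they lie in a common triangle of $G$. For the hardness part, the plan is to reduce from a known NP-complete variant of the ordinary independent set problem, such as maximum independent set on cubic graphs or on triangle-free graphs, both of which remain NP-complete.

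Given an instance $H$, the construction builds $G$ from constant-size gadgets. To each vertex $v\in V(H)$ we attach a \emph{vertex gadget} containing a distinguished selection edge $e_v$ together with a few triangles whose edges pairwise conflict in $K_e(G)$; the gadget is balanced so that $e_v$ is taken in an optimal independent edge set precisely when ``$v$ is selected'' in the independent set of $H$, while the auxiliary edges always contribute a fixed number of tokens. To each edge $\{u,v\}\in E(H)$ we attach an \emph{edge gadget} that forces $e_u$ and $e_v$ into a common triangle of $G$, so that $e_u$ and $e_v$ cannot both be chosen, mirroring the independent-set constraint on $H$. A counting argument should then yield $\alpha^{\prime}(G)=c_1|V(H)|+c_2|E(H)|+\alpha(H)$ for explicit constants $c_1,c_2$ depending only on the gadget sizes, from which $\alpha(H)$ is recovered in polynomial time.

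The main obstacle is keeping $G$ free of odd wheels, that is, simultaneously avoiding $K_4$ (so that $\omega(G)\leq 3$) and forcing every neighborhood $N(x)$ to induce a bipartite subgraph. Naive constructions such as the universal-vertex trick of Lemma~\ref{alpha' NP-hard} fail this test as soon as the input $H$ has an odd cycle, since the universal vertex then sees an odd cycle inside its own neighborhood. The gadgets should therefore rely on pendant simplicial vertices and short path subdivisions, so that each triangle shares at most a single vertex or edge with any other triangle and each $N(x)$ decomposes into bipartite fragments. Checking this local no-odd-wheel condition gadget by gadget, and verifying that the counting identity between $\alpha(H)$ and $\alpha^{\prime}(G)$ survives the subdivisions, will be the most delicate part of the proof.
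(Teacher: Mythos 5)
Your NP-membership argument is fine, but the hardness half is a plan rather than a proof, and its central device is flawed as stated. In a graph without odd wheels we have $\omega(G)\leq 3$, so two edges lie in a common clique of $G$ only if they share an endpoint and span a triangle. Hence an ``edge gadget that forces $e_u$ and $e_v$ into a common triangle'' is impossible for selection edges living in disjoint vertex gadgets; to realize the conflict directly you would have to make the selection edges of adjacent vertices of $H$ share endpoints, and then the shared endpoints acquire neighborhoods that encode the adjacency structure of $H$, which is exactly the non-bipartite-neighborhood (odd-wheel) obstruction you yourself identify for the universal-vertex trick. Your fallback --- propagating the conflict through subdivided chains of triangles with pendant simplicial vertices --- is indeed the right kind of remedy, but it is precisely the delicate part, and you leave it unconstructed: you give no concrete chain gadget, no argument that it enforces ``not both ends selected'' (a chain of triangles naturally gives an implication/parity-type constraint whose internal contribution depends on the choices at its ends), and no proof of the exact counting identity $\alpha^{\prime}(G)=c_1|V(H)|+c_2|E(H)|+\alpha(H)$, which requires the gadget's internal contribution to be the same constant in every case. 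Without these, the reduction does not go through.

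For comparison, the paper resolves exactly this difficulty by a different route: it reduces from 3-SAT, phrased as vertex cover in $K_e(G)$. Clause gadgets are copies of $L(K_4)\simeq cp(3)$, whose three maximum independent sets of edges partition its edge set and encode the choice of satisfying literal; variable gadgets are triangles with the two literals as labeled edges and a simplicial vertex on the third edge; and the conflict/propagation between a variable gadget and a clause gadget is a strip of five triangles (a $2$-chain and a $3$-chain closed up into a triangle strip) with a simplicial vertex attached to every chain edge, which normalizes the cover cost of each link to exactly $6$ vertices regardless of the truth value, yielding the exact bound $L+14M$. If you want to salvage your reduction from maximum independent set, you would need to design and verify an analogous constant-cost conflict strip between two selection edges, check the no-odd-wheel condition on every vertex of the strip, and prove the case analysis showing the strip forbids selecting both ends without ever changing its internal optimum --- none of which is currently in your write-up.
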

\begin{proof}
We reduce 3-SAT to the vertex cover problem in 
edge-clique graphs of graphs without odd wheels. 

\smallskip 

\noindent
Let $H \simeq cp(3)$, ie, $L(K_4)$.
See Fig.~\ref{fig:H_KeH}{\sc (A)}.
Let $S$ be a 3-sun as depicted in Fig.~\ref{fig:H_KeH}{\sc (B)}.
The graph $H$ is obtained from $S$ by adding 
three edges between pairs of vertices of 
degree two in $S$.%  
\footnote{In~\cite[Theorem~14]{kn:lakshmanan2} the authors 
prove that every maximal clique in $K_e(G)$ contains a 
simplicial vertex if and only if $G$ does not contain,  
as an induced subgraph, $K_4$  
nor a 3-sun with 0, 1, 2 or 3 edges connecting the 
vertices of degree two.}
Call the three vertices of degree four in $S$, the `inner 
triangle' of $H$ and call the remaining three vertices of $H$ the 
`outer triangle.' 

\begin{figure}[t]
  \centering
  \parbox{0.7\textwidth}{
  \includegraphics[width=0.69\textwidth]{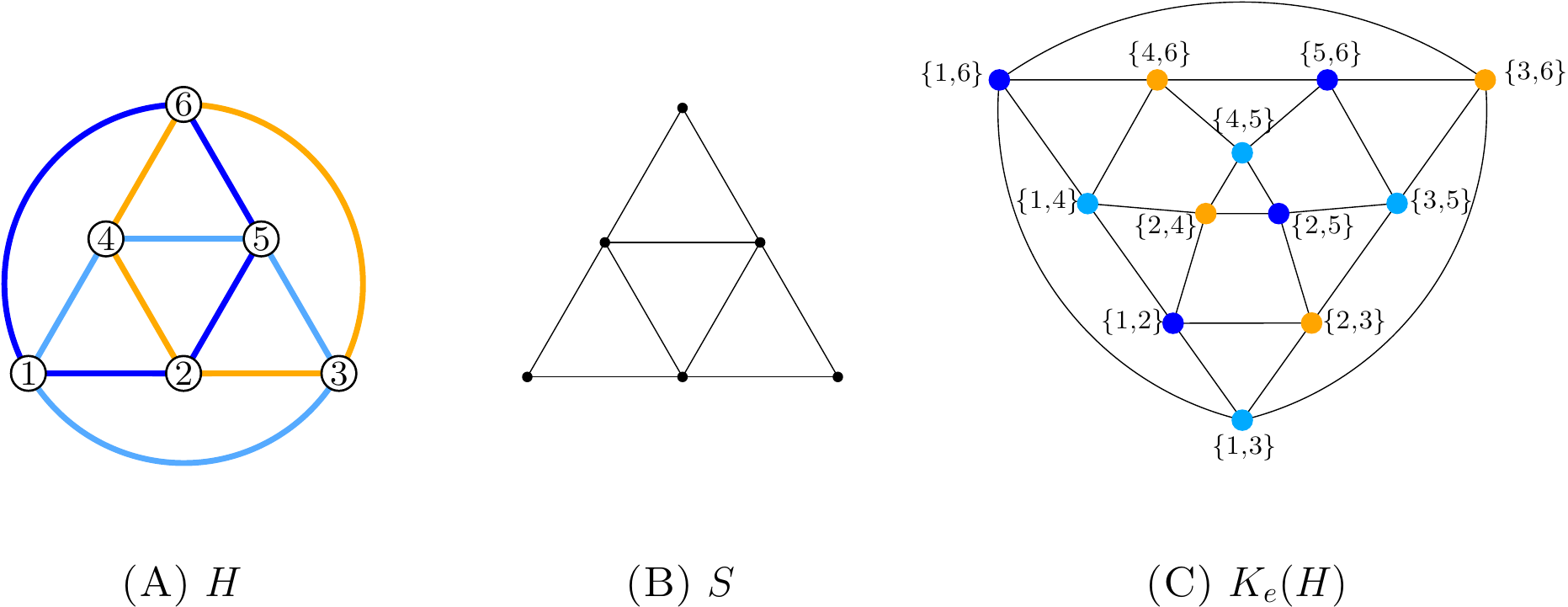}
  \caption{This figure shows $H$, $S$ and $K_e(H)$.
 In $H$, vertices 2, 4, 5 induce the inner triangle
 and the remaining three vertices induce the outer triangle.
 The three colors for the edges of $H$ indicate the partition
 of three maximum independent sets of edges.
  }
  \label{fig:H_KeH}}
  \qquad
  \begin{minipage}{2.7cm}
  \includegraphics[width=1.0\textwidth]{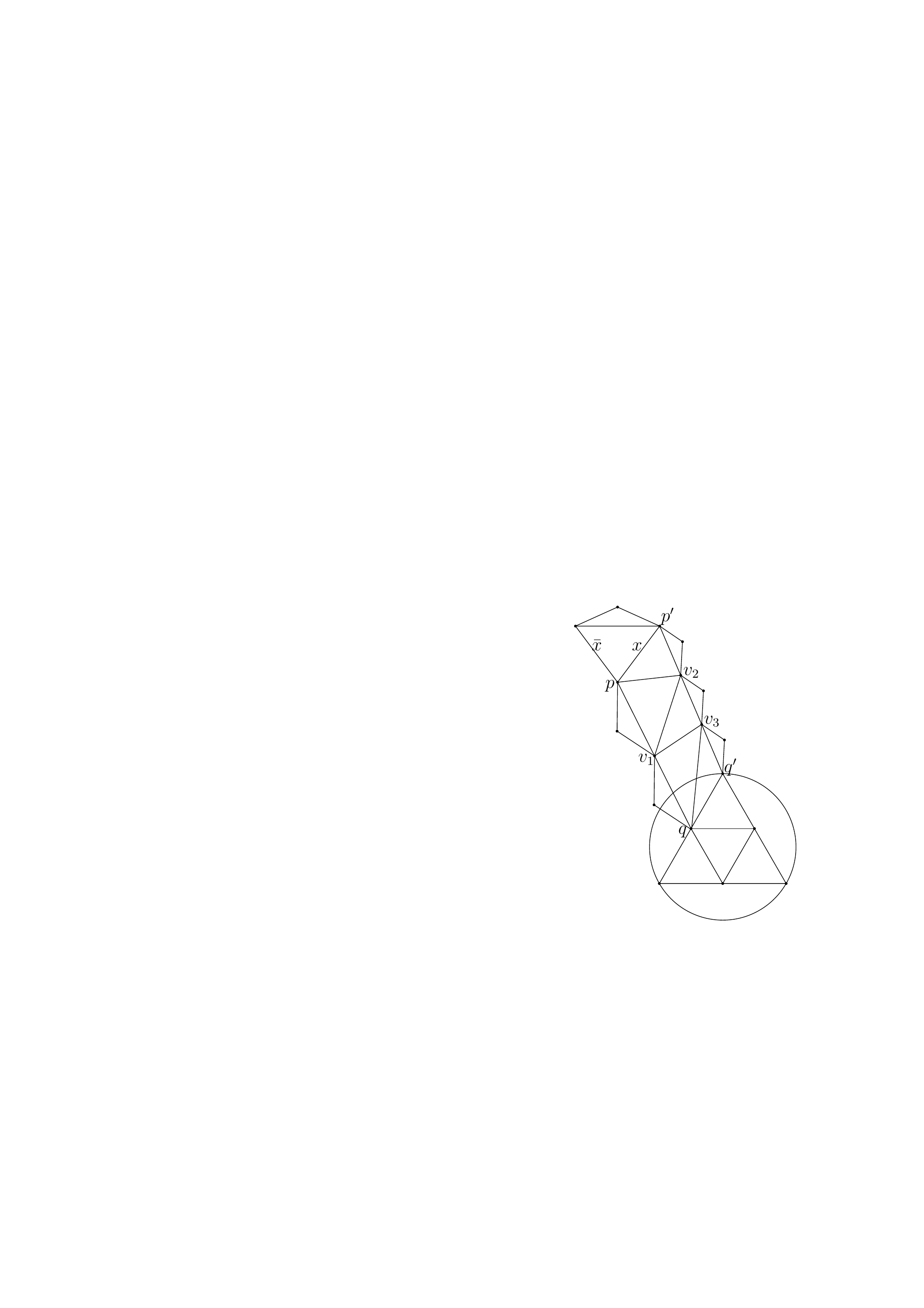}
  \caption{This figure shows the construction of a link 
  between a variable gadget and a clause gadget.
  }
  \label{fig:link}  
  \end{minipage}
  \vspace{-5mm}
\end{figure}

\noindent
Notice that $H$ has 3 maximum independent sets of 
edges. 
Each maximum independent set of edges is an induced $C_4$ 
consisting of one 
edge from the inner triangle, one edge from the outer triangle, 
and two edges between the two triangles. The three independent sets 
partition the edges of $H$. 

\noindent
The six edges of $H$ between the inner and outer triangle form 
a 6-cycle in $K_e(H)$. Let $F$ denote this set of edges in $H$.  

\smallskip 

\noindent
For each clause $(x_i \; \vee \; x_j \; \vee \; x_k)$
take one copy of $H$.   
Take an independent set of three edges contained in $F$ 
and label these with $x_i$, $x_j$ and $x_k$. 

\smallskip 

\noindent
For each variable $x$ take a triangle. Label one edge 
of the triangle with the literal $x$ and one other edge of 
the triangle with its negation $\Bar{x}$. 
Then add a simplicial vertex to the unlabeled edge
of the triangle.

\smallskip 

\noindent 
Construct links between variable gadgets and clause gadgets 
as follows. 
Let $(x_i\;\vee\;x_j\;\vee\;x_k)$ be a clause. 
There are four steps.
First add a 2-chain $p v_1 q$ from an endpoint $p$ 
of the edge labeled $x_i$ in the variable gadget
to one endpoint $q$ of the edge $x_i$ in the clause gadget.
See Fig.~\ref{fig:link}.
Similarly, add a 3-chain $p' v_2 v_3 q'$ between 
the other two endpoints, where $p'$ is in the 
variable gadget and $q'$ is in the clause gadget.
Then add four edges $p v_2$, $v_2 v_1$, $v_1 v_3$ and $v_3 q$
to let the rectangle $pqq'p'$ become a triangle strip with five triangles.
Finally, add a simplicial vertex to 
each of the edges in the 2-chain and the 3-chain.
We construct links for the other two literals in the clause 
in the same manner. 

\smallskip

\noindent 
Let $G$ be the graph constructed in this manner. 
Consider the triangles $T$ which are 
the edge-clique graphs of the triangles in $G$
containing the newly-added simplicial vertices.
Notice that simplicial vertices of a graph may be removed without 
changing the complexity of the vertex cover problem and so 
the vertices of $T$ can be removed without changing the complexity.
Let $K$ be the graph obtained from $K_e(G)$ by removing 
the edges of these triangles. 

\smallskip 

\noindent
Let $L$ be the number of variables, 
let $M$ be the number of clauses in the 3-SAT formula. 
Assume that there is a satisfying assignment. Then choose the vertices in 
$K$  
corresponding to literals that 
are {\sc true} in the vertex cover. 
The variable gadgets need $L$ vertices and the 
links need $6M$ vertices in the vertex cover.
Since this assignment is satisfying, we need at most 
$8M$ vertices to cover the remaining edges in the 
clause structures, since the outgoing edge from each literal 
which is {\sc true} is covered. Thus there is a vertex 
cover of $K_e(G)$ with 
$L+14M$ vertices. 

\smallskip 

\noindent
Assume that $K_e(G)$  
has a vertex cover with $L+14M$ vertices. At least 
$L$ vertices in $K$ are covering the edges in the variable gadgets
and at least $6M$ vertices in $K$ are covering the edges in the links.
The other $8M$ vertices of $K$ are covering the 
edges in the clause gadgets. 
Take the literals of the variable gadgets 
that are in the vertex cover as an assignment 
for the formula. Each clause gadget must have one literal vertex 
of which the outgoing edge is covered. Therefore, the assignment is 
satisfying. 

\medskip 

\noindent
This proves the theorem. 
\qed\end{proof}

\section{Algorithms for planar graphs}
%%%%%%%%%%%%%%%%%%%%%%%%%%%%%%%%%%%%%%

In this section we obtain 
a PTAS for a maximum independent set of edges in planar graphs. 

\begin{lemma}
\label{bounded tw}
Let $k \in \mathbb{N}$. There exists a linear-time algorithm 
which computes a maximum independent set of edges for graphs 
of treewidth at most $k$.
\end{lemma}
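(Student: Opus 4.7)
The plan is to reduce the lemma to the textbook dynamic programming over a tree decomposition, treating a ``maximum independent set of edges'' in the sense of a maximum matching (a set of pairwise non-adjacent edges). First I would invoke Bodlaender's linear-time algorithm to compute a tree decomposition $(T,\{X_t\}_{t\in T})$ of $G$ of width at most $k$, and then convert it, in linear time, into a nice tree decomposition with $O(n)$ nodes of the usual four types (leaf, introduce, forget, join).

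The DP table I would use is $A[t,U]$ indexed by a node $t$ of $T$ and a subset $U\subseteq X_t$, where $A[t,U]$ records the maximum size of a matching $M$ in the subgraph $G_t$ induced by all vertices appearing in the subtree rooted at $t$ such that $U$ is exactly the set of vertices of the bag $X_t$ that are saturated by $M$. The transitions are local:
\begin{itemize}
\item at a leaf, $A[t,\varnothing]=0$;
\item at an introduce node adding a vertex $v$, the newly introduced vertex is initially unsaturated, so $A[t,U]$ is copied from the child when $v\notin U$ and is $-\infty$ otherwise;
\item at a forget node that forgets $v$, I take the maximum over (a) not matching $v$ (use the child's value with $v\notin U$) and (b) matching $v$ to some still-free neighbour $w\in X_t$, which costs $O(k)$ choices;
\item at a join node, I combine compatible states from the two children by requiring that no bag vertex is saturated on both sides, summing the two values (no subtraction is needed because edges are only committed at forget nodes, so no edge is counted twice).
\end{itemize}
Correctness follows by a routine induction on $T$, and the answer is $\max_U A[r,U]$ at the root $r$. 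The running time is $O(2^{k+1}\cdot k\cdot n)$, which is linear for fixed $k$.

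As an alternative route I would note that the property ``$M$ is a matching'' is expressible in $\mathrm{MSO}_2$ by
\[
\forall v\ \forall e,e'\in M\ \bigl(\mathrm{inc}(v,e)\land\mathrm{inc}(v,e')\to e=e'\bigr),
\]
so the optimization version of Courcelle's theorem immediately yields a linear-time algorithm on graphs of treewidth at most $k$, at the cost of a much larger hidden constant.

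The step I expect to require the most care is the join node: one must avoid double-counting any edge whose endpoints both lie in the shared bag $X_t$. The cleanest fix is the convention above, committing edges only at forget nodes, so each edge is accounted for exactly once in the subtree where its later-forgotten endpoint disappears; this is standard but easy to get wrong if edges are instead selected eagerly at introduce nodes.
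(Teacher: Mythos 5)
There is a genuine gap: you have solved the wrong problem. In this paper an ``independent set of edges'' is \emph{not} a matching; it is an independent set in the edge-clique graph $K_e(G)$, i.e.\ a set $A$ of edges such that no two edges of $A$ are contained in a common clique of $G$. Concretely, two edges conflict exactly when the union of their endpoints induces a clique: two edges sharing a vertex conflict iff their other endpoints are adjacent (they span a triangle), and two vertex-disjoint edges conflict iff their four endpoints induce a $K_4$. In particular, a legal independent set of edges may contain many edges through a single vertex (all edges of a star at $x$ are pairwise independent whenever the corresponding neighbours of $x$ form an independent set --- this is exactly the quantity $d'(x)$ that Lemma~\ref{lem cograph} exploits), and it may \emph{not} contain two disjoint edges of a $K_4$. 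Your DP state $A[t,U]$ (``which bag vertices are saturated'') and your MSO formula $\forall v\,\forall e,e'\in M\,(\mathrm{inc}(v,e)\wedge\mathrm{inc}(v,e')\to e=e')$ both encode the matching predicate, so the algorithm you describe computes the matching number of $G$, not $\alpha'(G)=\alpha(K_e(G))$.

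Your ``alternative route'' is in fact the paper's entire proof: the authors simply observe that the property is expressible in monadic second-order logic and invoke Courcelle's theorem. The repair is not difficult. In MSO$_2$ one writes that for all distinct $e,e'\in M$ the endpoints of $e\cup e'$ are not pairwise adjacent. For an explicit DP one must index states by the subset $F\subseteq E(X_t)$ of selected edges inside the bag (as the paper does in its appendix for planar graphs of bounded treewidth, giving $2^{O(k^2)}$ states in general and $2^{O(k)}$ for planar graphs), using the fact that every clique of $G$ --- in particular every triangle or $K_4$ witnessing a conflict --- lies entirely inside some bag, so conflicts can be detected locally before the relevant vertices are forgotten. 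But as written, neither of your two arguments establishes the lemma.
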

\begin{proof}
Notice that the problem can be formulated in 
monadic second-order logic. The claim follows from 
Courcelle's theorem~\cite{kn:courcelle}. 
\qed\end{proof}

Consider a plane graph $G$. 
If all vertices lie on the outerface, then $G$ is 1-outerplanar. 
Otherwise, $G$ is $k$-outerplanar if the removal of all vertices 
of the outerface results in a $(k-1)$-outerplanar graph. 
A graph is $k$-outerplanar if it has a $k$-outerplanar embedding 
in the plane~\cite{kn:baker}. 

\begin{theorem}[\cite{kn:bodlaender}]
The treewidth of a $k$-outerplanar graph is at most $3k-1$. 
\end{theorem}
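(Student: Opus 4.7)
The plan is to proceed by induction on $k$. The base case $k=1$ is the classical fact that outerplanar graphs have treewidth at most $2 = 3\cdot 1-1$: every outerplanar graph is a subgraph of a $2$-tree, which one obtains by triangulating the interior faces of a $2$-connected outerplanar embedding (adding a pendant or merging at cut vertices to handle connectivity), and $2$-trees are exactly the maximal graphs of treewidth two. I would cite this base case rather than reprove it.

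For the inductive step, fix a $k$-outerplanar embedding of $G$ and let $O$ be the set of vertices on the outer face. By definition $G' = G - O$ admits a $(k-1)$-outerplanar embedding, and by the induction hypothesis it has a tree decomposition $(T,\{X_t\}_{t\in V(T)})$ of width at most $3(k-1)-1 = 3k-4$. The goal is to extend this to a tree decomposition of $G$ of width at most $3k-1$ by adding at most three vertices of $O$ to each bag. This is the only slack the statement allows, so the entire difficulty lies in choosing these extra vertices correctly.

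First I would triangulate both the outer layer and the second layer without destroying $k$-outerplanarity, so that the ``boundary'' $O$ has a clean cyclic structure around each face of the second layer. The key observation is that after such a triangulation, each edge incident with $O$ lies on a triangle that is either entirely in $O$ or has exactly one endpoint in the second layer. For each bag $X_t$ one then selects the (at most three) vertices of $O$ that one needs to make every such triangle appear in some bag. Concretely, for each vertex $v \in X_t$ of the second layer, one picks the two neighbours of $v$ in $O$ that are extremal with respect to the cyclic order of $O$ around $v$, and one additional vertex of $O$ to bridge between consecutive triangles; an amortized argument along the tree $T$ shows that three vertices per bag suffice to cover all outer-face edges and adjacencies.

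The main obstacle is exactly the combinatorial bookkeeping in the last paragraph: verifying that (i) every edge of $G$ involving $O$ is contained in at least one augmented bag, and (ii) the ``occupied bags'' of each individual vertex $u \in O$ still form a connected subtree of $T$. Both conditions must follow from the fact that $O$ lies on a single face of the plane embedding and so interacts with the second layer in a cyclically ordered fashion that is compatible with the tree structure inherited from $G'$. Once those two properties are established, the induction closes and the treewidth bound $3k-1$ follows.
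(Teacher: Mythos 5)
This statement is not proved in the paper at all; it is quoted from Bodlaender's survey, where the actual argument is quite different from your sketch: after a degree-reduction step (vertex expansion to maximum degree three, which preserves $k$-outerplanarity and can only increase treewidth), Bodlaender constructs a maximal spanning forest whose vertex and edge ``remember numbers'' are bounded in terms of $k$, and a general lemma converts such a spanning forest into a tree decomposition of width at most $3k-1$. So your layer-peeling induction is a genuinely different route, and it is exactly at its central step that the proposal has a real gap. The induction hypothesis hands you an \emph{arbitrary} tree decomposition of $G-O$ of width $3k-4$; nothing forces that decomposition to be compatible with the planar embedding or with the cyclic order in which the outer-face vertices attach to the second layer. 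A single vertex $u\in O$ may have neighbours whose bags lie far apart in $T$, so to satisfy the connectivity condition $u$ must be inserted into every bag on the paths joining them, and the insertion paths of different outer vertices can overlap heavily. The claim that ``an amortized argument along $T$ shows that three vertices per bag suffice'' is therefore not a bookkeeping detail: it is essentially the theorem itself, and as stated it is unsupported -- indeed your own local rule (two extremal $O$-neighbours for \emph{each} second-layer vertex of the bag, plus a bridging vertex) already threatens to add $\Theta(k)$ outer vertices to a bag rather than three.

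To close the argument along your lines you would have to strengthen the induction hypothesis, e.g.\ prove that every $k$-outerplanar graph admits a width-$(3k-1)$ decomposition with additional structure tied to the embedding (such as the outer boundary of each layer being traversed consecutively by the decomposition), so that the next layer can be grafted on in cyclic order; alternatively, abandon the naive peeling and follow Bodlaender's spanning-forest/remember-number argument, which is precisely the device that encodes the required compatibility between the tree and the embedding. The triangulation step also needs care (added chords must not change the layering), but that is minor compared with the missing compatibility argument. As it stands, the proposal is a plausible plan rather than a proof.
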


\begin{theorem}
\label{thm PTAS planar}
There exists a polynomial-time approximation scheme 
for a maximum independent set of edges in planar graphs. 
\end{theorem}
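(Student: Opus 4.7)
My plan is to apply Baker's layering technique to reduce the problem to polynomially many instances on graphs of bounded treewidth. Fix $\epsilon>0$ and set $k=\lceil 2/\epsilon\rceil$. Given a plane embedding of $G$, form the standard outerplanar layering $L_1,L_2,\ldots,L_d$, where $L_1$ is the vertex set of the outer face and, inductively, $L_i$ is the outer face of $G-(L_1\cup\cdots\cup L_{i-1})$. A standard property of this decomposition is that every edge of $G$ has both endpoints in layers whose indices differ by at most one.

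For each shift $j\in\{0,1,\ldots,k-1\}$, let $R_j=\bigcup_{i\equiv j\,(\mathrm{mod}\,k)} L_i$ and put $G_j=G-R_j$. Between two consecutive deleted layers at most $k-1$ layers survive, so each connected component of $G_j$ is $(k-1)$-outerplanar and, by the theorem of Bodlaender cited above, has treewidth at most $3k-4$. By Lemma~\ref{bounded tw}, $\alpha^{\prime}(H)$ can be computed in linear time on each such component $H$, and because vertex-disjoint components of $G_j$ induce vertex-disjoint components in $K_e(G_j)$, the values add to give $\alpha^{\prime}(G_j)$ in polynomial time. The algorithm outputs $\max_j \alpha^{\prime}(G_j)$, over $k$ shifts, which is polynomial in $n$ for fixed $\epsilon$.

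To bound the approximation ratio, let $E^{\star}$ attain $\alpha^{\prime}(G)$. An edge of $E^{\star}$ is killed by shift $j$ only if one of its two endpoints lies in a layer with index $\equiv j\pmod{k}$; each endpoint belongs to exactly one residue class, so each edge of $E^{\star}$ is killed for at most two shifts. Averaging over the $k$ shifts, some $j^{\star}$ removes at most $2|E^{\star}|/k$ edges of $E^{\star}$, so the surviving edges, if still independent in $K_e(G_{j^{\star}})$, witness $\alpha^{\prime}(G_{j^{\star}})\geq(1-2/k)\alpha^{\prime}(G)\geq(1-\epsilon)\alpha^{\prime}(G)$.

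The only subtle step is verifying that $K_e(G)$-independence of a set of edges of $G_{j^{\star}}$ coincides with its $K_e(G_{j^{\star}})$-independence. This is where I would be careful: since $G_{j^{\star}}$ is an \emph{induced} subgraph of $G$, any clique of $G$ that contains two edges of $G_{j^{\star}}$ uses only vertices of $G_{j^{\star}}$ and hence is already a clique of $G_{j^{\star}}$. Therefore the surviving part of $E^{\star}$ remains an independent edge set in $G_{j^{\star}}$, the averaging argument is valid, and the construction delivers a PTAS.
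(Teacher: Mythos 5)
Your proof is correct and follows essentially the same route as the paper's: Baker's shifting over the outerplanar layers, exact solutions on the bounded-treewidth pieces via Lemma~\ref{bounded tw}, and the averaging argument yielding the factor $1-2/k$ (the paper solves the slabs between deleted layers separately and takes their union, which is the same algorithm in different clothing). One nitpick: a clique of $G$ containing two edges of $G_{j^{\star}}$ need not use only vertices of $G_{j^{\star}}$; the correct (and sufficient) observation is that the union of the endpoints of the two edges already induces a clique, and since $G_{j^{\star}}$ is an induced subgraph this clique lies in $G_{j^{\star}}$, so $K_e(G_{j^{\star}})$ is exactly the subgraph of $K_e(G)$ induced by $E(G_{j^{\star}})$ and your conclusion stands.
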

We use Baker's technique~\cite{kn:baker} in the proof of this theorem
and we moved the proof to Appendix~\ref{Appendix C}. 

\begin{remark}
It is easy to see that the treewidth algorithm, for graphs of 
treewidth $k$, can be implemented to run in $O(2^{O(k)} n)$ time. 
Since the treewidth of planar graphs is bounded by $O(\sqrt{n})$, 
this gives an exact algorithm for a maximum independent set 
of edges which runs in $O(2^{O(\sqrt{n})})$ time. 
\end{remark}

\begin{remark}
Blanchette, Kim and Vetta 
prove in~\cite{kn:blanchette} that there is a 
PTAS for edge-clique cover of planar graphs. 
\end{remark}

\subsection{Planar graphs without triangle separator}
%%%%%%%%%%%%%%%%%%%%%%%%%%%%%%%%%%%%%%%%%%%%%%%%%%%%%

A triangle separator (of a connected graph) 
is a triangle the removal of which disconnect the graph.
We compute $\alpha^{\prime}(G)$ for planar graphs $G$ 
without triangle separator
in the following theorem.

\begin{theorem}
There exists an $O(n^{3/2})$ algorithm that computes a 
maximum independent set of edges in planar graphs without 
triangle separator. 
\end{theorem}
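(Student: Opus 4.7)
The plan is to reduce the computation of $\alpha^{\prime}(G)$ to a maximum matching problem on an auxiliary planar graph, and then invoke a known near-linear-time matching algorithm.

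First I would exploit the absence of a triangle separator to pin down the clique structure of $G$. Fix any planar embedding (working component by component if $G$ is disconnected). If a triangle $T=\{a,b,c\}$ of $G$ is not a face, then $G$ has vertices lying strictly inside and strictly outside the bounded region of $T$, and any path between such vertices must pass through a vertex of $T$, so $\{a,b,c\}$ is a separator. Hence every triangle of $G$ is a face. A similar region argument rules out a $K_4$ subgraph whenever its connected component contains at least five vertices: in the embedding, some additional vertex of the component and one of the four $K_4$-vertices end up on opposite sides of some triangle of the $K_4$, and that triangle is then a separator. So $\omega(G)\leq 3$, and by the definition of $K_e(G)$ two edges of $G$ are adjacent in $K_e(G)$ precisely when they share a triangular face of $G$.

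Next I would model the independent-set problem as a matching problem. Let $F\subseteq E(G)$ denote the edges lying on no triangular face; these are isolated in $K_e(G)$ and belong to every maximum independent set. Each remaining edge lies on one or two triangular faces. Build an auxiliary graph $H$ whose vertex set is the set of triangular faces of $G$: for each $G$-edge $e$ bordering two triangular faces $t_1, t_2$ add the edge $\{t_1,t_2\}$ to $H$, and for each $G$-edge $e$ bordering exactly one triangular face $t$ add a fresh leaf $v_e$ together with the edge $\{t,v_e\}$. A set of non-free edges is independent in $K_e(G)$ iff no triangular face contains two of them, which is precisely the matching condition in $H$, so
\[
\alpha^{\prime}(G) \;=\; |F| \;+\; \mu(H),
\]
where $\mu(H)$ is the matching number of $H$.

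The graph $H$ is planar: it arises from the planar dual of $G$ by retaining only triangular-face vertices and placing each new leaf $v_e$ inside the non-triangular face that the corresponding $G$-edge borders. Euler's formula gives $|V(H)|, |E(H)| = O(n)$. Since $H$ need not be bipartite (three triangular faces arranged around a common vertex and pairwise sharing edges form a triangle in $H$), I would invoke the Micali--Vazirani general matching algorithm, which runs in $O(\sqrt{|V(H)|}\,|E(H)|) = O(n^{3/2})$ on this sparse instance; the planar matching algorithm of Mucha--Sankowski gives the same bound. Adding $|F|$ at the end yields $\alpha^{\prime}(G)$ within the claimed time.

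The main obstacle is the structural step. The region arguments used above exploit the planar embedding freely, so making them rigorous requires careful handling of low-connectivity configurations, namely cut vertices, small blocks, and multiple components. I expect one wants to work block-by-block and check the small-$n$ edge cases (graphs of fewer than five vertices, or ones in which $K_4$ itself is the only nontrivial block) separately. Once that structural lemma is secured, the matching reduction and the invocation of planar matching are routine.
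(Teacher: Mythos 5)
Your reduction is correct and rests on the same core idea as the paper's: turn independence in $K_e(G)$ into a matching problem on (a variant of) the planar dual, using the fact that without a triangle separator every triangle of $G$ is a face, so adjacency in $K_e(G)$ means ``sharing a triangular face.'' The packaging differs, though. The paper keeps every face: it puts a hub vertex inside each non-triangular face, gives original edges weight $1$ and new edges weight $0$, and computes a maximum \emph{weight} matching in the dual of this augmented graph, citing Edmonds; the structural facts you prove up front (a non-facial triangle separates; a $K_4$ forces a triangle separator once its component has a fifth vertex, so $G\not\simeq K_4$ may be assumed) appear there only implicitly inside the correctness argument. You instead prune the dual to the triangular faces, attach a pendant vertex for each edge with exactly one triangular side, count edges lying on no triangle for free, and solve an unweighted maximum matching via Micali--Vazirani in $O(\sqrt{|V|}\,|E|)=O(n^{3/2})$ on the sparse instance. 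Your unweighted formulation is arguably the cleaner route to the stated bound, since Edmonds' general weighted-matching algorithm does not by itself run in $O(n^{3/2})$ and one would otherwise have to appeal to a faster (planar) weighted-matching routine. The corner cases you flag are real but harmless: handle components with at most four vertices (in particular a $K_4$ component, where the face-based graph would wrongly report $2$ instead of $\alpha^{\prime}(K_4)=1$) and a bare $K_3$ component (whose two triangular faces give parallel edges in $H$) separately, and work component by component since the triangle-separator hypothesis is stated for connected graphs; with those checks your argument is complete.
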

\begin{proof}
Let $G$ be an embedding of a planar graph without triangle 
separator. We may assume that $G \not\simeq K_4$.  
In all the faces that have length more than three add 
a new vertex and make it adjacent to all vertices in the face. 
Let $G^{\prime}$ be this graph. Give the edges of $G$ a weight 1 
and give the new edges a weight 0.  

\smallskip 

\noindent 
Let $H$ be the dual of $G^{\prime}$. In $H$ the weight of an edge 
is the weight of the edge in $G^{\prime}$ that it crosses. 
We claim that a 
solution is obtained by computing a maximum weight 
matching in $H$~\cite{kn:edmonds}. 

\smallskip 

\noindent
Let $M^{d}$ be a matching in $H$. We may assume that $M^{d}$ contains 
no edges of weight 0. Then every edge of $M^{d}$ crosses an edge 
of $G$. We claim that this is an independent set $M$ 
of edges in $G$. 
Since $M^{d}$ is a matching no two triangular 
faces of $G^{\prime}$ incident 
with different edges of $M^{d}$ coincide. Assume that two edges 
of $M$ lie in a triangle $T$ of $G$. Then $T$ cannot be a face 
of $G^{\prime}$ 
since $M^{d}$ is a matching. But then $T$ is a separator which 
contradicts our assumption. 

\smallskip 

\noindent 
Let $M$ be an independent set of edges in $G$ and let 
$M^{d}$ be the corresponding edges of $H$. 
Since $M$ is an independent set of edges no two edges of $M$ 
lie in a triangle. Assume that $M^{d}$ is not a matching, 
and let $f$ be a common face of $G^{\prime}$ of 
two edges in $M^{d}$. Then $f$ is contained in a 
face of $G$ which is not a triangle. But then the edges 
of $M$ are the same. 
\qed\end{proof}
           
\section{Rankwidth of edge-clique graphs of cocktail parties}
%%%%%%%%%%%%%%%%%%%%%%%%%%%%%%%%%%%%%%%%%%%%%%%%%%%%%%%%%%%%%

In this section we show that the related problem to 
compute $\theta_e(G)$ is probably much harder than the 
independence number. We show that $\theta_e(G)$ relates 
to well-known open problems for cocktail party graphs $G$. 
Cocktail parties are a proper subclass of cographs. 

\begin{definition}
The cocktail party graph $cp(n)$ is the complement of a 
matching with $2n$ vertices. 
\end{definition}
%We show the class of edge-clique graphs of cocktail parties has unbounded rankwidth as follows.
Gy\'arf\'as~\cite{kn:gyarfas} showed
an interesting lowerbound in the following theorem.
Here, two vertices $x$ and $y$ are {\em equivalent\/} if 
they are adjacent and have the same closed neighborhood. 

\begin{theorem}
\label{gyarfas}
If a graph $G$ has $n$ vertices and contains neither isolated 
nor equivalent vertices then $\theta_e(G) \geq \log_2(n+1)$. 
\end{theorem}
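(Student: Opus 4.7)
The plan is to fix an optimum edge-clique cover $\mathcal{F}=\{Q_1,\dots,Q_k\}$ with $k=\theta_e(G)$, and then encode each vertex by its incidence vector in $\{0,1\}^k$. Specifically, for every $v \in V(G)$ define
\[
\chi(v) \;=\; \bigl(\,[v \in Q_1],\,[v \in Q_2],\,\dots,\,[v \in Q_k]\,\bigr) \;\in\; \{0,1\}^k.
\]
The strategy is to show that the map $v\mapsto \chi(v)$ is injective and avoids the zero vector. Since the number of nonzero elements of $\{0,1\}^k$ is $2^k-1$, this immediately yields $n\leq 2^k-1$, which rearranges to $k\geq \log_2(n+1)$.

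First, I would rule out the zero vector. If $\chi(v)=\mathbf{0}$, then $v$ lies in no clique of $\mathcal{F}$; since every edge incident with $v$ must be covered by some $Q_i$, this forces $v$ to have no incident edges, contradicting the hypothesis that $G$ has no isolated vertices.

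Next, I would show injectivity. Suppose $u\neq v$ with $\chi(u)=\chi(v)$. By the previous step, this common vector is nonzero, so there exists $i$ with $u,v\in Q_i$, and hence $uv\in E(G)$ because $Q_i$ is a clique. To show $N[u]=N[v]$, pick any $w\notin\{u,v\}$: then $w\in N(u)$ iff the edge $uw$ lies in some $Q_j\in\mathcal{F}$, which happens iff $u\in Q_j$ and $w\in Q_j$. Because $\chi(u)=\chi(v)$, the condition $u\in Q_j$ is equivalent to $v\in Q_j$, so $w\in N(u)$ iff $w\in N(v)$. Combined with $uv\in E$, this gives $N[u]=N[v]$, making $u$ and $v$ equivalent and contradicting the hypothesis.

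I do not expect a real obstacle here; the only point requiring a little care is the verification that equal incidence vectors force both adjacency of $u,v$ and equality of their external neighborhoods, so that the full closed-neighborhood condition for equivalence is met. Everything else is a counting step: once injectivity and nonzero-ness of $\chi$ are in place, $n \leq |\{0,1\}^k \setminus \{\mathbf{0}\}| = 2^k-1$ finishes the proof.
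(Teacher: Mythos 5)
Your proof is correct: the incidence-vector encoding is injective and avoids the zero vector exactly as you argue, giving $n \leq 2^k-1$ and hence $\theta_e(G) \geq \log_2(n+1)$. Note that the paper does not prove this statement at all --- it is quoted from Gy\'arf\'as's paper --- and your argument is precisely the standard (indeed Gy\'arf\'as's original) set-representation proof, so there is nothing to reconcile with the text.
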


Notice that a cocktail party graph has no 
equivalent vertices. Thus, by Theorem~\ref{gyarfas}, 
\[\theta_e(cp(n)) \geq log_2(2n+1).\] 

%\smallskip 

For the cocktail party graph 
an exact formula for $\theta_e(cp(n))$ 
appears in~\cite{kn:gregory}. 
In that paper Gregory and Pullman (see also~\cite{kn:gargano,kn:korner}) 
prove that 
\[\lim_{n \rightarrow \infty} \; \frac{\theta_e(cp(n))}{\log_2(n)}=1.\] 

%\smallskip 

%Albertson and Collins prove that there is a 1-1 correspondence between the
%maximal cliques in $G$ and $K_e(G)$~\cite{kn:albertson}. 
%The same holds true for the intersections
%of maximal cliques in $G$ and in $K_e(G)$.

%\smallskip

For a graph $G$ we denote the vertex-clique cover number of 
$G$ by $\kappa(G)$. 
Notice that, for a graph $G$,  
\[\theta_e(G)=\kappa(K_e(G)).\] 

%\smallskip 

Albertson and Collins mention the following result (due to 
Shearer)~\cite{kn:albertson} for the graphs $K_e^r(cp(n))$, 
defined recursively by $K_e^r(cp(n))=K_e(K_e^{r-1}(cp(n)))$.
\[\alpha(K_e^r(cp(n))) \leq 3\cdot (2^r)!\] 
Thus, for $r=1$, $\alpha(K_e(cp(n))) \leq 6$. 
However, the following is easily checked (see Lemma~\ref{lem cograph}). 

\begin{lemma}
\label{bound alpha}
For $n \geq 2$ 
\[\alpha(K_e(cp(n))) =4.\]
\end{lemma}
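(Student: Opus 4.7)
The plan is to invoke Lemma~\ref{lem cograph} directly, since $cp(n)$ is a cograph (it is the join of $n$ copies of $\bar{K_2}$). The lemma reduces the computation of $\alpha(K_e(cp(n))) = \alpha'(cp(n))$ to understanding the independent sets of $cp(n)$ together with the values $d'(x)$.

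First I would describe the independent sets: in $cp(n)$ the only non-adjacent pairs are the $n$ mate pairs $\{x_i,y_i\}$, so the independent sets of $cp(n)$ are exactly the singletons and the mate pairs; in particular every feasible $W$ in Equation~(\ref{eqn0}) satisfies $|W| \le 2$. Next I would evaluate $d'(x)$ for an arbitrary vertex $x$: the open neighborhood $N(x)$ consists of all vertices except $x$ and its mate, and the induced subgraph $cp(n)[N(x)]$ is isomorphic to $cp(n-1)$. For $n \ge 2$ we have $\alpha(cp(n-1)) = 2$, realized by any mate pair, so $d'(x) = 2$ for every vertex $x$.

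Plugging these values into Equation~(\ref{eqn0}), a singleton $W = \{x\}$ contributes $d'(x) = 2$, while a mate pair $W = \{x_i,y_i\}$ contributes $d'(x_i) + d'(y_i) = 4$. Taking the maximum yields $\alpha(K_e(cp(n))) = 4$ as claimed. There is no real obstacle here; the only small point of care is the base case $n=2$, where $N(x)$ still induces $cp(1) = \bar{K_2}$ and so $d'(x) = 2$ holds. If one wants an explicit independent set of size four witnessing the lower bound, one can take the four edges of $cp(n)[\{x_1,y_1,x_2,y_2\}]$: any two of them either share a vertex whose remaining endpoints form the mate pair $\{x_1,y_1\}$ or $\{x_2,y_2\}$, or are vertex-disjoint with one of those mate pairs appearing as a crossing pair, and hence are never contained in a common triangle of $cp(n)$.
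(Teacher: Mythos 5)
Your proof is correct and follows essentially the same route as the paper, which simply asserts that the value is ``easily checked'' via Lemma~\ref{lem cograph}: you apply Equation~(\ref{eqn0}), note that independent sets of $cp(n)$ are singletons or mate pairs and that $d'(x)=2$ for every vertex, and conclude the maximum is $4$. Your explicit witness of four independent edges on two mate pairs is a nice addition but not a departure from the paper's approach.
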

%Notice that Formula~(\ref{eqn0}) confirms 
%Lemma~\ref{bound alpha}.

%\smallskip 

\begin{definition}
A class of graphs $\mathcal{G}$ is $\chi$-bounded if there 
exists a function $f$ such that for every graph $G \in \mathcal{G}$, 
\[\chi(G) \leq f(\omega(G)).\]
\end{definition}
Dvo\v{r}\'ak and Kr\'al' prove that the class of graphs 
with rankwidth at most $k$ is $\chi$-bounded~\cite{kn:dvorak}. 

\begin{theorem}
The class of edge-clique graphs of cocktail parties has 
unbounded rankwidth. 
\end{theorem}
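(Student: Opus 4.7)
The plan is to combine Lemma~\ref{bound alpha} (forcing $\alpha(K_e(cp(n)))=4$) with Theorem~\ref{gyarfas} (forcing $\theta_e(cp(n))$ to diverge), by passing to the complement and invoking the $\chi$-boundedness theorem of Dvo\v{r}\'ak and Kr\'al'. Since rankwidth is preserved under complementation up to an additive constant, it will suffice to show that the class $\{\overline{K_e(cp(n))}\}_{n \geq 2}$ has unbounded rankwidth.

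First I would translate the two facts above into parameters of $\overline{K_e(cp(n))}$. Lemma~\ref{bound alpha} yields
\[ \omega(\overline{K_e(cp(n))}) \;=\; \alpha(K_e(cp(n))) \;=\; 4 \qquad \text{for every } n \geq 2. \]
Since the chromatic number of a complement equals the clique-cover number, and $\kappa(K_e(G))=\theta_e(G)$,
\[ \chi(\overline{K_e(cp(n))}) \;=\; \kappa(K_e(cp(n))) \;=\; \theta_e(cp(n)) \;\geq\; \log_2(2n+1), \]
the last inequality by Theorem~\ref{gyarfas}, so the chromatic number is unbounded in $n$ while the clique number stays at~$4$.

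I would then conclude by contradiction. If $\{\overline{K_e(cp(n))}\}$ had rankwidth uniformly bounded by some $k$, the Dvo\v{r}\'ak--Kr\'al' theorem would produce a single function $f_k$ with
\[ \chi(\overline{K_e(cp(n))}) \;\leq\; f_k(\omega(\overline{K_e(cp(n))})) \;=\; f_k(4) \]
for all $n$, a constant bound, contradicting the divergence of $\theta_e(cp(n))$. The main obstacle is the complementation step hidden in the first paragraph: one must verify that bounded rankwidth is preserved under taking complements. This reduces to the observation that for any bipartition $(A,B)$ the $A\times B$ submatrices of the adjacency matrices of $G$ and of $\overline{G}$ differ by the all-ones matrix, whose $GF(2)$-rank is one; hence the two cut-ranks differ by at most one, and the same rank-decomposition tree witnesses the rankwidth of $\overline{G}$ up to $+1$.
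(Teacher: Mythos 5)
Your proposal is correct and follows essentially the same route as the paper: complement the edge-clique graphs, note rankwidth changes by at most one under complementation, and apply the Dvo\v{r}\'ak--Kr\'al' $\chi$-boundedness theorem together with $\alpha(K_e(cp(n)))=4$ and Gy\'arf\'as's lower bound on $\theta_e(cp(n))$ to reach a contradiction. The only difference is cosmetic: you spell out the $GF(2)$ cut-rank argument for the complementation step, which the paper simply cites from Oum.
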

\begin{proof}
It is easy to see that the rankwidth of any graph 
is at most one more than the rankwidth of its complement~\cite{kn:oum}. 
Assume that there is a constant $k$ such that 
the rankwidth of $K_e(G)$ is at most $k$ whenever $G$ is a 
cocktail party graph. Let 
\[\mathcal{K}=\{\; \overline{K_e(G)} \;|\; G \simeq cp(n), 
\;n \in \mathbb{N}\;\}.\] 
Then the rankwidth of graphs in $\mathcal{K}$ is uniformly bounded 
by $k+1$. By the result of Dvo\v{r}\'ak and Kr\'al', there exists 
a function $f$ such that 
\[log_2(2n+1) \leq \theta_e(G)=\kappa(K_e(G)) \leq f(\alpha(K_e(G))) = 4f\] 
for every cocktail party graph $G$. This contradicts 
Lemma~\ref{bound alpha} and Theorem~\ref{gyarfas}. 
\qed\end{proof}

\section{Conclusion and discussions}
%%%%%%%%%%%%%%%%%%%%%%%%%%%%%%%%%%%%

In this paper we show polynomial-time algorithms 
and NP-completeness proofs
for the independence number of edge-clique 
graphs of some classes of graphs.
In particular we show that for cographs 
and distance-hereditary graphs
the independence number of edge-clique graphs 
satisfies Formula~{\rm (\ref{eqn0})}.
The results lead us ask new questions that 
for which other classes of graphs the formula also holds true
and under what conditions the formula is true.
For edge-clique cover problem
we make a conjecture as follows.
\begin{conjecture}
The edge-clique cover problem is NP-complete for cographs. 
\end{conjecture}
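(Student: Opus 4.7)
The plan is to reduce from a standard NP-complete problem such as graph $k$-colouring or $3$-SAT. Since $\theta_e(G)=\kappa(K_e(G))=\chi(\overline{K_e(G)})$, it suffices to exhibit, for any suitable input instance, a cograph $G$ whose edge-clique graph (or its complement) encodes the instance. First I would analyse the structure of $K_e(G)$ when $G$ is a cograph: by the cotree, every edge of $G$ is created at some $\otimes$-node, and two edges lie in a common clique exactly when a common chain of $\otimes$-ancestors joins their endpoints. This yields a clean combinatorial description of adjacency in $K_e(G)$ that the gadgetry must exploit.

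Second, I would attempt a reduction from chromatic number on a class where colouring is already hard (say triangle-free graphs of bounded degree). Given such an input $H$, I would build a cograph $G_H$ by taking the join of a ``colour palette'' cograph $P$ (a large cocktail party $cp(k)$) with one ``vertex gadget'' per vertex of $H$, plus further additions that force independence between gadgets corresponding to non-adjacent vertices of $H$. A cover of $G_H$ by few cliques should then correspond to a proper colouring of $H$, since the cliques in a join decompose as $C_1\cup C_2$ with $C_i$ a clique in the two factors, and this would localise the covering problem to the palette. I would aim for an equality of the form $\theta_e(G_H)=\chi(H)+c(|H|)$, where the additive term counts edges that can be covered independently and uniformly.

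The main obstacle is the $P_4$-freeness of cographs. Every induced neighbourhood is itself a cograph, joins create enormous numbers of overlapping cliques, and one cannot freely insert forbidden-$P_4$ gadgets to separate covering choices. Controlling the edge-clique graph so that the covering is forced to respect the intended gadget boundaries, without accidentally admitting a cheaper cover that mixes cliques across gadgets, is where the proof is likely to be delicate. Theorem~\ref{gyarfas} together with the bound $\alpha(K_e(cp(n)))=4$ from Lemma~\ref{bound alpha} is encouraging: it shows that already within cographs $\theta_e$ can grow like $\log n$ while $\alpha(K_e)$ stays bounded, so there is room for hardness.

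As a fallback I would try two variants. One is to restrict to complete multipartite graphs, which are cographs, and reduce from Boolean matrix factorisation or biclique cover; here $\theta_e$ becomes a concrete covering-design problem whose NP-hardness in nearby formulations is already known and might transfer. The other is to lift a known NP-hardness proof for edge-clique cover on restricted graph classes (e.g.\ the proofs of~\cite{kn:holyer,kn:hoover}) by replacing its base structures with cograph realisations of the same incidences; the challenge, and the heart of the conjecture, is to verify that such a translation preserves the intended optimum.
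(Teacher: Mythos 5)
There is a fundamental gap: this statement is a \emph{conjecture}, and neither the paper nor your proposal proves it. What you have written is a research plan, not a proof. No reduction is actually constructed: the ``vertex gadgets'' are unspecified, the claimed equality $\theta_e(G_H)=\chi(H)+c(|H|)$ is never established, and the step you yourself flag as delicate --- preventing a cover from mixing cliques across gadget boundaries inside a join --- is precisely the unsolved core of the problem, not a detail to be checked later. In a join, every clique does split as $C_1\cup C_2$ with $C_i$ a clique of the factors, but the cross edges between the two sides must also be covered, and it is exactly the covering of these cross edges that escapes any ``localisation to the palette'': an optimal cover can trade cliques across the join in design-like ways, so the intended correspondence with colourings of $H$ is not forced by anything in your outline.

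Your fallback via complete multipartite graphs runs into the very phenomenon the paper gives as \emph{motivation} for leaving this as a conjecture (Appendix~\ref{Appendix D}): by the theorem of Park, Kim and Sano~\cite{kn:park}, deciding whether $\theta_e(K_n^m)=n^2$ (for $3\le m\le n+1$) is equivalent to the existence of $m-2$ mutually orthogonal Latin squares of order $n$, a long-standing open design-theoretic question (e.g.\ three MOLS of order $10$) that is hard in practice but not known to be NP-hard. So restricting to $K_n^m$ does not ``transfer'' known hardness; it lands you on instances whose complexity status is itself open, which is why the authors offer only supporting evidence (Theorem~\ref{gyarfas}, Lemma~\ref{bound alpha}, the kernelization lower bound of Cygan et al.) rather than a proof. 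As it stands, your proposal neither proves NP-hardness nor isolates a concrete new lemma that would; it restates the difficulty and should not be presented as a proof of the conjecture.
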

We move the motivation of this conjecture to Appendix~\ref{Appendix D}.

\smallskip

Finally, trivially perfect graphs is the subclass of cographs 
that are chordal.
A graph is trivially perfect if it does not contain $C_4$ nor 
$P_4$ as an induced subgraph. For this class of graphs we 
have equality:  

\begin{theorem}
If a graph $G$ is connected and trivially perfect then 
$\alpha^{\prime}(G)=\theta_e(G)$. 
\end{theorem}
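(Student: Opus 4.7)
The plan begins with the trivial direction: for every graph $G$, $\alpha^{\prime}(G)\le\theta_e(G)$, because in any edge-clique cover $\mathcal{C}$ of $G$ each clique can contain at most one edge of any fixed independent set $A\subseteq E(G)$ (otherwise two edges of $A$ would lie in a common clique, contradicting their independence in $K_e(G)$). So everything reduces to proving the reverse inequality when $G$ is connected and trivially perfect.

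The cleanest route is to invoke the fact recorded in the Introduction that $\alpha(K_e(G))=\theta_e(G)$ for every chordal graph $G$. To apply it one only needs that trivially perfect graphs are chordal, which follows from their forbidden-induced-subgraph characterization: by definition they contain no induced $C_4$ nor $P_4$, and every induced $C_k$ with $k\ge 5$ contains an induced $P_4$; hence no induced cycle of length $\ge 4$ occurs at all. The connectedness hypothesis is in fact superfluous here, since both $\alpha^{\prime}$ and $\theta_e$ are additive over connected components.

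If one prefers a self-contained argument, connected trivially perfect graphs admit a convenient recursion: every such graph on at least two vertices has a universal vertex $v$, and $G-v$ is again trivially perfect. I would induct on $|V(G)|$: from optimal edge-clique covers of the components of $G-v$ one obtains an edge-clique cover of $G$ by attaching $v$ to every clique and adding the star-edges $\{v,u\}$ for each isolated vertex $u$ of $G-v$; a matching independent set of edges is built by combining the inductive independent sets in the components with one star-edge per isolated vertex. The main obstacle is the bookkeeping that aligns these isolated-vertex contributions on both sides so that cover size and independent-edge-set size grow in lockstep at each inductive step; the chordal shortcut avoids this work entirely and is what I would adopt.
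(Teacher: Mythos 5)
Your proposal is correct, but it proves the theorem by a different route than the paper. The paper's argument is elementary and stays inside the trivially perfect world: since $G$ is connected with at least two vertices, every vertex of a maximum independent set of vertices has an incident edge, and picking one such edge per vertex gives an independent set of edges (two chosen edges cannot lie in a common clique because their chosen endpoints are nonadjacent), so $\alpha^{\prime}(G)\geq\alpha(G)$; for trivially perfect graphs $\alpha(G)$ equals the number of maximal cliques, which upper-bounds $\theta_e(G)$, and together with the easy inequality $\theta_e(G)\geq\alpha^{\prime}(G)$ the chain collapses to $\alpha(G)=\alpha^{\prime}(G)=\theta_e(G)$. You instead outsource the hard direction to the fact quoted in the Introduction that $\alpha(K_e(G))=\theta_e(G)$ for chordal $G$, after checking that $P_4$-free plus $C_4$-free implies chordal (any induced $C_k$, $k\geq 5$, contains an induced $P_4$); this is valid, it makes the theorem a special case of a stronger cited result, and it correctly shows the connectivity hypothesis is dispensable since both parameters are additive over components, though it proves nothing new beyond the citation. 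Incidentally, your fallback induction is less troublesome than you fear: if $v$ is a universal vertex and $t$ is the number of isolated vertices of $G-v$, the constructions you describe give $\theta_e(G)\leq\theta_e(G-v)+t$ and $\alpha^{\prime}(G)\geq\alpha^{\prime}(G-v)+t$, which combined with the induction hypothesis on the components of $G-v$ and the trivial inequality $\alpha^{\prime}(G)\leq\theta_e(G)$ already closes the argument, with no delicate bookkeeping; as a bonus it, like the paper's proof, yields the sharper conclusion $\alpha^{\prime}(G)=\theta_e(G)=$ number of maximal cliques, which the chordal shortcut does not give directly.
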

\begin{proof}
If $G$ contains only one vertex then $\alpha^{\prime}(G)=\theta_e(G)=0$. 
Assume that $G$ contains more than one vertex.
Since $G$ is connected then $\alpha^{\prime}(G) \geq \alpha(G)$.
When a graph $G$ is trivially perfect then 
the independence number is equal 
to the number of maximal cliques in $G$. 
Thus $\alpha(G)= \theta_e(G) \geq \alpha^{\prime}(G)$.
Finally, we conclude $\alpha^{\prime}(G)=\theta_e(G)$.
\qed\end{proof}

\clearpage

\appendix

\section{Proof of Theorem~\ref{thm PTAS planar}}
\label{Appendix C}
%%%%%%%%%%%%%%%%%%%%%%%%%%%%%%%%%%%%%%%%%%%%

\begin{theorem}
There exists a polynomial-time approximation scheme 
for a maximum independent set of edges in planar graphs. 
\end{theorem}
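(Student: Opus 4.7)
The plan is to apply Baker's layering technique~\cite{kn:baker} to reduce the problem to instances of bounded treewidth. Fix $\epsilon>0$ and set $k=\lceil 2/\epsilon\rceil$. Given a planar embedding of $G$, I perform a breadth-first search from the outer face and partition $V(G)$ into layers $L_{0},L_{1},\dots,L_{d}$ by their distance from the outer face. For each shift $r\in\{0,1,\dots,k\}$, let $G_{r}$ denote the graph obtained from $G$ by deleting every vertex in a layer $L_{j}$ with $j\equiv r\pmod{k+1}$. Every connected component of $G_{r}$ lies in a band of at most $k$ consecutive layers and is therefore $k$-outerplanar; by Bodlaender's theorem its treewidth is at most $3k-1$. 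Applying Lemma~\ref{bounded tw} to each component yields, in linear time, a maximum independent set $A_{r}$ of edges of $G_{r}$. The algorithm outputs the largest of $A_{0},\dots,A_{k}$.

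Next I have to verify that $A_{r}$, regarded as a set of edges of $G$, is independent in $K_{e}(G)$ and not merely in $K_{e}(G_{r})$. Because $K_{5}$ is nonplanar, every clique of $G$ has at most four vertices. Take two edges $e_{1},e_{2}\in A_{r}$. If they lie in different components of $G_{r}$, they share no endpoint, and no two of their four endpoints can be adjacent in $G$, since two vertices of $V(G_{r})$ adjacent in $G$ are already adjacent in $G_{r}$ and hence lie in one component; so no $K_{4}$ on these endpoints can exist in $G$. If $e_{1},e_{2}$ lie in the same component $C$, then all their endpoints lie in $V(G_{r})$, and any common clique of $G$ containing both edges has at most four vertices, hence must contain a triangle on the endpoints of $e_{1},e_{2}$ whose vertices are pairwise adjacent in $G$ and therefore in $G_{r}$. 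This triangle would be a common clique in $G_{r}$, contradicting the independence of $A_{r}$ in $K_{e}(G_{r})$.

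For the approximation bound, let $\mathrm{OPT}$ be a maximum independent set of edges of $G$. For any edge $e=uv$ the BFS levels of $u$ and $v$ differ by at most one, so $e$ is removed from $G_{r}$ for at most two values of $r$ modulo $k+1$. Summing,
\[
\sum_{r=0}^{k}\bigl|\mathrm{OPT}\setminus A_{r}\bigr|\;\le\;2\,|\mathrm{OPT}|,
\]
so some shift $r^{\star}$ satisfies $|A_{r^{\star}}|\ge (1-\tfrac{2}{k+1})\,|\mathrm{OPT}|\ge (1-\epsilon)\,|\mathrm{OPT}|$, and for fixed $\epsilon$ the total running time is polynomial in $n$.

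The main subtlety, and the only place where the argument departs from textbook Baker, is the gluing step. Adjacency in $K_{e}(G)$ is defined through cliques rather than through single edges, so a priori two edges might be non-adjacent in $K_{e}(G_{r})$ yet belong to a common clique of $G$ that was broken when its apex vertex was deleted with some $L_{j}$. Planarity saves us: the bound $\omega(G)\le 4$ forces every common clique of two surviving edges to be essentially determined by their endpoints, and the triangle on those endpoints always survives in $G_{r}$ whenever the two edges do. Without this planar clique bound, the clean union of componentwise solutions would fail.
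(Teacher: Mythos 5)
Your proof is correct and follows essentially the same route as the paper: Baker's layering combined with the exact bounded-treewidth algorithm of Lemma~\ref{bounded tw} (via Bodlaender's bound for $k$-outerplanar graphs), with a $(1-\tfrac{2}{k+1})$ accounting in place of the paper's $(1-\tfrac{2}{k})$ count. One remark: the ``main subtlety'' you isolate does not actually need planarity, and your closing claim that the union of componentwise solutions would fail without the bound $\omega(G)\le 4$ is not right --- two edges lie in a common clique of $G$ if and only if their (at most four) endpoints are pairwise adjacent, so adjacency in $K_e(G)$ is determined by the induced subgraph on those endpoints and is therefore inherited by every induced subgraph $G_{r}$, for arbitrary graphs. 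This observation also repairs the small slip where you appeal to a ``triangle'' on the endpoints of two vertex-disjoint edges: the relevant common clique there is the $K_4$ on all four endpoints, not a triangle.
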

\begin{proof}
We use Baker's technique~\cite{kn:baker}. 
Consider a plane embedding of $G$. Let $L_0$ be the set of vertices 
in the outerface. Remove the vertices of $L_0$ from $G$ 
and let $L_1$ be the new outerface. Continuing this process partitions 
the vertices into layers. 

\smallskip 

\noindent
Fix an integer $k$. 
For $i \in \{0,\dots,k-1\}$ and $j\in \mathbb{N} \cup \{-1,0\}$, let 
$G_{ij}$ be the subgraph of $G$ induced by the vertices 
in layers $L_t$ where 
\[t \in \{\;jk+i+1,\;jk+i+2,\;\dots,\; jk+i+(k-1)\;\}~~~\text{and}~~~t \geq 0.\] 
Notice that the outerplanarity of each $G_{ij}$ is at most $k$. 
By lemma~\ref{bounded tw} there exists a linear-time algorithm 
which computes a maximum independent set of edges $A_{ij}$ in $G_{ij}$. 

\smallskip 

\noindent
Every edge in $G$ connects two vertices in 
either the same layer or in two adjacent layers. For fixed $i$, 
the consecutive graphs $G_{ij}$ skip two layers. So 
$\cup_{j}A_{ij}$ is an independent set of 
edges in $G$. 

\smallskip 

\noindent
Let $A$ be a maximum independent set of edges in $G$. 
Sum over all graphs $G_{ij}$ the edges of $A$ that are contained 
in $G_{ij}$. Then every edge of $A$ is counted $k-2$ times. 
Also, since $A_{ij}$ is an exact solution for $G_{ij}$ this is 
at least as big as the number of edges induced by $A$ in $G_{ij}$. 
Therefore, the sum of $A_{ij}$ is at least $k-2$ times the optimum. 
If we take the maximum over $i\in\{0,\dots,k-1\}$, we find an 
approximation of size at least $1-\frac{2}{k}$ times the optimum. 

\smallskip 

\noindent
This proves the theorem.
\qed\end{proof}

\section{Conjecture on edge-clique cover problem for cographs}
\label{Appendix D}
%%%%%%%%%%%%%%%%%%%%%%%%%%%%%%%%%%%%%%%%%%%%

According to Theorem~\ref{gyarfas}
Gy\'arf\'as~\cite{kn:gyarfas} result implies that the edge-clique 
cover problem is fixed-parameter tractable (see also~\cite{kn:gramm}). 

\smallskip 

Cygan et al~\cite{kn:cygan} show that, under the assumption of the 
exponential time hypothesis, there is 
no polynomial-time algorithm which reduces the 
parameterized problem $(\theta_e(G),k)$ to a kernel of size bounded 
by $2^{o(k)}$. In their proof the authors make use of the fact that 
$\theta_e(cp(2^{\ell}))$ is a [sic] 
``hard instance for the edge-clique cover problem, at least from a 
point of view of the currently known algorithms.''    
Note that, in contrast, the parameterized edge-clique partition 
problem can be reduced to a kernel with at most $k^2$ 
vertices~\cite{kn:mujuni}. (Mujuni and Rosamond also mention that 
the edge-clique cover problem probably has no polynomial kernel.)
These observations lead us 
to investigate edge-clique graphs of cocktail parties
in Section~5. 

\smallskip

Let $K_n^m$ denote the complete multipartite graph with $m$ partite sets 
each having $n$ vertices. Obviously, $K_n^m$ is a cograph with $n\cdot m$ 
vertices. 

\smallskip

\begin{theorem}[\cite{kn:park}]
Assume that 
\[3 \leq m \leq n+1.\] 
Then $\theta_e(K_n^m)=n^2$ if and only if there exists  
a collection of at least $m-2$ pairwise orthogonal Latin squares 
of order $n$. 
\end{theorem}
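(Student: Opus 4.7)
The plan is to interpret the condition $\theta_e(K_n^m) = n^2$ as an edge-partition into cliques of maximum size and then show that such a partition is in bijection with a family of $m-2$ mutually orthogonal Latin squares (MOLS) of order $n$.

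First I would establish the trivial lower bound $\theta_e(K_n^m) \geq n^2$. Label the $m$ parts of $K_n^m$ by $P_1,\dots,P_m$. A clique in $K_n^m$ picks at most one vertex from each part, so any clique has at most $\binom{m}{2}$ edges. Since $K_n^m$ has exactly $\binom{m}{2}n^2$ edges, any edge-clique cover has at least $n^2$ cliques. Equality forces every clique in the cover to be a transversal (a clique of size exactly $m$, one vertex per part) and forces the cover to be an edge-partition: each edge lies in exactly one clique.

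Next I would encode such an optimal cover combinatorially. Identify each vertex of $P_i$ with an element of $[n] = \{1,\dots,n\}$, so a transversal clique is an $m$-tuple $(x_1,\dots,x_m) \in [n]^m$. The partition condition is equivalent to: for every pair of distinct indices $i,j \in [m]$, the projection of the $n^2$ chosen tuples onto coordinates $(i,j)$ is a bijection onto $[n]^2$ (every edge between $P_i$ and $P_j$ is covered, and there are exactly $n^2$ of them). Using the first two coordinates to index the cliques, for $k \in \{3,\dots,m\}$ define an $n\times n$ array $L_k$ by $L_k(x_1,x_2) = x_k$, where $(x_1,x_2,x_3,\dots,x_m)$ is the unique clique in the cover with first two coordinates $(x_1,x_2)$.

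The bijectivity of the projection onto $(1,k)$ (resp.\ $(2,k)$) says that each row (resp.\ column) of $L_k$ contains every element of $[n]$ exactly once, so $L_k$ is a Latin square. The bijectivity of the projection onto $(k,\ell)$ for $3 \leq k < \ell \leq m$ says that the map $(x_1,x_2) \mapsto (L_k(x_1,x_2), L_\ell(x_1,x_2))$ is a bijection onto $[n]^2$, which is precisely orthogonality of $L_k$ and $L_\ell$. Thus the cover yields $m-2$ MOLS of order $n$. Conversely, from any family $L_3,\dots,L_m$ of $m-2$ MOLS of order $n$, the $n^2$ tuples $(x_1,x_2,L_3(x_1,x_2),\dots,L_m(x_1,x_2))$ are transversal cliques whose pairwise-coordinate projections are all bijections, hence partition the edges of $K_n^m$; so $\theta_e(K_n^m) \leq n^2$, and combined with the lower bound we get equality. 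The hypothesis $m \leq n+1$ is the well-known necessary bound for a family of $m-2$ MOLS of order $n$ to exist, ensuring the statement is non-vacuous.

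There is no serious obstacle here; the proof is a clean dictionary between optimal edge-clique partitions of $K_n^m$ and MOLS. The only mild care needed is in pinning down that every optimal cover must in fact be a partition into maximum cliques (so that the encoding is well-defined), which follows from the counting $n^2 \cdot \binom{m}{2} = |E(K_n^m)|$ at equality.
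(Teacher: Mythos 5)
Your proof is correct. Note that the paper itself offers no proof of this statement: it is imported verbatim from the cited reference of Park, Kim and Sano, and the only internal remark is the observation (which you also derive) that an edge-clique cover of $K_n^m$ by $n^2$ cliques must consist of mutually edge-disjoint maximum cliques. Your argument is the standard dictionary: the counting bound $\theta_e(K_n^m)\geq \binom{m}{2}n^2/\binom{m}{2}=n^2$ with equality forcing a partition into transversal $m$-cliques, followed by the classical equivalence between such partitions (transversal designs) and $m-2$ mutually orthogonal Latin squares via $L_k(x_1,x_2)=x_k$. All the individual steps check out, including the identification of the Latin property with bijectivity of the $(1,k)$ and $(2,k)$ projections and of orthogonality with bijectivity of the $(k,\ell)$ projections, and the converse construction. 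This is almost certainly the same route as the cited source, so there is nothing substantive to flag.
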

Notice that, if there exists an edge-clique cover of $K_n^m$ 
with $n^2$ cliques, then these cliques are mutually edge-disjoint. 
Finding the maximal number of mutually orthogonal 
Latin squares of order $n$ is a renowned open problem. 
The problem has a wide field 
of applications, eg in combinatorics, designs 
of experiments, group theory and quantum informatics.   

\smallskip

Unless $n$ is a prime power, 
the maximal number of MOLS is known for only a few 
orders. We briefly mention a few results.
Let $f(n)$ denote the 
maximal number of MOLS of order $n$. The well-known `Euler-spoiler'  
shows that $f(n)=1$ only for 
$n=2$ and $n=6$. 
Also, $f(n) \leq n-1$ for all $n>1$, 
and Chowla, Erd\"os and Straus~\cite{kn:chowla} 
show that 
\[\lim_{n \rightarrow \infty} f(n) = \infty.\] 

\smallskip

Define 
\[n_r = \max\;\{\;n\;|\; f(n) < r\;\}.\] 
A lowerbound for the speed at which $f(n)$ grows was obtained 
by Wilson, who showed that $n_r < r^{17}$ when 
$r$ is sufficiently large~\cite{kn:wilson}. 
Better bounds for $n_r$, for some specific values of $r$, were 
obtained by various authors (see eg~\cite{kn:brouwer}). 

\smallskip

See eg ~\cite{kn:feifei} for some recent computational 
attempts to find orthogonal Latin squares. 
The problem seems extremely hard, both from a 
combinatorial and from a computational point of 
view~\cite{kn:korner}. 
Despite many efforts, 
the existence of three pairwise orthogonal Latin squares of 
order 10 is, as far as we know, still unclear. 

\smallskip

Finally, the observations mentioned above lead us to conjecture 
that the edge-clique cover problem is NP-complete for cographs. 

\section{Planar graphs of bounded treewidth}
%%%%%%%%%%%%%%%%%%%%%%%%%%%%%%%%%%%%%%%%%%%%

\begin{theorem}
Let $G$ be a planar graph with treewidth 
at most $k$. Then $\alpha^{\prime}(G)$ can be computed in 
$O(2^{O(k)}n)$ time, where $n$ is the number of vertices of $G$.
\end{theorem}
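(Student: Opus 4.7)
The plan is to do dynamic programming on a nice tree decomposition $(T,\{X_t\}_{t\in V(T)})$ of $G$ of width at most $k$, which can be computed in linear time for fixed $k$. The key structural observation is that the independence condition in $K_e(G)$ is local in any tree decomposition: two edges of $A$ conflict iff their at most four endpoints form a clique in $G$, and every clique of $G$ sits inside some bag.

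First I would refine the decomposition to the standard nice form with introduce-vertex, introduce-edge, forget, and join nodes, and arrange that every edge of $G$ incident to a vertex $v$ is introduced before $v$ is forgotten. For each node $t$ let $G_t$ be the subgraph of $G$ on the vertices introduced in the subtree of $t$ using only the edges introduced there. At each $t$ I maintain a table indexed by a state $(F,D)$, where $F\subseteq E(G[X_t])$ is the set of bag-internal edges already placed in $A$ and $D\subseteq E(G[X_t])\setminus F$ is the set of bag-internal edges that are forbidden from entering $A$ because doing so would close a triangle or a $K_4$ with an already-chosen edge incident to a now-forgotten vertex. The table entry $c[t,(F,D)]$ stores the largest $|A\cap E(G_t)|$ attainable subject to $(F,D)$.

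The transitions are then routine. At an introduce-edge-$uv$ node I branch on whether $uv\in A$; adding $uv$ is legal exactly when $uv\notin D$ and $F\cup\{uv\}$ is independent in $K_e(G[X_t])$. At a forget-$v$ node I project $F$ and $D$ onto $E(G[X_t\setminus\{v\}])$, and additionally add to $D$ every bag-internal edge $uw\in E(G[X_t\setminus\{v\}])$ that, once $v$ disappears, would close a triangle or $K_4$ in $G$ together with some $v$-incident edge of $F$; this is the only way in which a later commitment could silently violate independence. Join nodes combine entries with matching $(F,D)$ by summing their values and subtracting $|F|$ to avoid double-counting the bag-internal $A$-edges.

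The main obstacle will be bounding the number of DP states per bag by $2^{O(k)}$; naively $F$ and $D$ range over $3^{|E(G[X_t])|}$ configurations, which for a general graph of treewidth $k$ would be $2^{\Theta(k^2)}$. Planarity is the decisive hypothesis here: since $G[X_t]$ is a planar induced subgraph on at most $k+1$ vertices, by Euler's formula it has at most $3(k+1)-6$ edges, so there are only $3^{O(k)}=2^{O(k)}$ states per bag. Combined with the $O(n)$ nodes of the nice decomposition and $2^{O(k)}$ work per transition, the algorithm runs in $O(2^{O(k)}n)$ time, and $\alpha^\prime(G)$ is read off as $\max_{F,D} c[r,(F,D)]$ at the root $r$.
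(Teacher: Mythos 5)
Your proposal is sound and follows essentially the same route as the paper: a dynamic-programming table over a nice tree decomposition indexed by subsets of bag-internal edges, with planarity invoked exactly as in the paper to bound the number of edges inside a bag by $3(k+1)-6$ and hence the table size by $2^{O(k)}$. The extra forbidden set $D$ (and the attendant bookkeeping at join nodes, where the two children's forbidden sets should be united rather than required to match, and where subtracting $|F|$ only fits a convention in which bag-internal edges are counted in both children rather than once at introduce-edge nodes) is an artifact of your introduce-edge formulation; the paper sidesteps it by introducing every edge at the introduce node of its later endpoint, so that no edge decided afterwards can conflict with an edge incident to an already-forgotten vertex, and the state $F$ alone suffices.
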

\begin{proof}
Let $\{\mathcal{X},T\}$ be a nice tree decomposition of $G=(V,E)$, 
where $\mathcal{X} = \{X_1,\dots,\\
X_t\}$ is a family of subsets of $V$
and $T$ is a tree whose nodes are the 
subsets $X_i$~\cite{kn:kloks2}.
Since $\{\mathcal{X},T\}$ is nice,
$T$ is a rooted tree and there are four types of node $i$.
We choose an arbitrary node $r$ in $T$ as the root of $T$.

\smallskip

\noindent
For each node $i$, $G(i)$ is a subgraph of $G$
formed by all nodes in sets $X_j$,
with $j=i$ or $j$ a descendant of $i$.
Let $E(X_i)$ denote a set of the edges induced by $X_i$.
For node $i$ in tree decomposition and a subset $F \subseteq E(X_i)$,
we define $\alpha^{\prime}(i,F)$
as the maximum cardinality of independent set $A$ of edges of $G(i)$
with $A \cap E(X_i) = F$.
Note that $\alpha^{\prime}(i,F)=-\infty$ if $A$ does not exist.

\smallskip

\noindent
In the following, we consider the four types of nodes in $T$
and calculate $\alpha^{\prime}(i,F)$.

\smallskip

\noindent
{\sc Leaf:}
Let $i$ be a leaf node with $|X_i|=1$. 
Then $F=\es$ and $\alpha^{\prime}(i,F) = 0$.

\smallskip

\noindent
{\sc Join:}
Let $i$ be a node with children $j_1$, $j_2$.
Then
\begin{equation}
\alpha^{\prime}(i, F) = 
\alpha^{\prime}(j_1, F) + \alpha^{\prime}(j_2, F)-|F|.
\end{equation}

\smallskip

\noindent
{\sc Introduce:}
Let $i$ be a node with child node $j$ such 
that $X_i = X_j \cup \{v\}$, for some vertex $v \in V$.
Let $F \subseteq E(X_j)$. 
Then $\alpha^{\prime}(i,F) = \alpha^{\prime}(j,F)$.
Let $F'$ be a subset of the edges in $E(X_i)$ that are incident with $v$.
Then if $F \cup F'$ is independent, 
\begin{equation}
\alpha^{\prime}(i,F \cup F') = 
\alpha^{\prime}(j,F)+|F'|.
\end{equation}
Otherwise, if $F$ is not independent then 
$\alpha^{\prime}(i,F \cup F')=-\infty$. 

\smallskip

\noindent
{\sc Forget:}
Let $i$ be a node with child node $j$ such that $X_i = X_j \setminus\{v\}$, for some vertex $v \in V$.
Let $F \subseteq E(X_i)$
and let $F'$ be a subset of the edges in $E(X_j)$ that are incident with $v$.
Then
\begin{equation}
\alpha^{\prime}(i,F) = \max\;\{\;\alpha^{\prime}(j,F),\alpha^{\prime}(j,F \cup F')\;\}.
\end{equation}

\smallskip

\noindent
For node $i$ in tree decomposition we compute a table with all values $\alpha^{\prime}(i,F)$
via dynamic programming.
Since $G$ is planar, each node $i$ contains $O(k)$ edges.
Thus for any type of nodes, a table contains $O(2^{O(k)})$ entries
and can be calculated in $O(2^{O(k)})$ time.
Therefore, $\alpha^{\prime}(G)=\max\;\{\alpha^{\prime}(r,F)\}$ can be obtained in $O(2^{O(k)}n)$ time.

\smallskip

\noindent
This completes the proof.
\qed\end{proof}

\section{The bibliography for the appendix}

\let\oldthebibliography=\thebibliography
\let\oldendthebibliography=\endthebibliography
\renewenvironment{thebibliography}[1]{%
 	\oldthebibliography{#1}%
    \setcounter{enumiv}{ 41 }%
}{\oldendthebibliography}

\end{document}